\newcommand{\EQ}[1]{\begin{equation}\begin{split} #1 \end{split}\end{equation}}
\newcommand{\R}{\mathbb{R}}
\newcommand{\C}{\mathbb{C}}
\newcommand{\Sph}{\mathbb{S}}
\newcommand{\J}{\mathcal{J}}
\newcommand{\del}{\partial}
\newcommand\inner[2]{\left\langle#1,#2\right\rangle}
\newtheorem{theorem}{Theorem}[section]
\newtheorem{corollary}[theorem]{Corollary}
\newtheorem{lemma}[theorem]{Lemma}
\newtheorem{proposition}[theorem]{Proposition}
\newtheorem{remark}[theorem]{Remark}
\numberwithin{equation}{section}
\title[Concavity in Locally Symmetric Spaces with nonnegative curvature]{Concavity for elliptic and parabolic equations in locally symmetric spaces with nonnegative curvature}
\author[Shrey Aryan]{Shrey Aryan}
\author[Michael B. Law]{Michael B. Law}
\address{MIT, Department of Mathematics, 77 Massachusetts Avenue, Cambridge, MA 02139, USA.}
\email{shrey183@mit.edu, mikelaw@mit.edu}
\begin{document}

\begin{abstract}
We establish a concavity principle for solutions to elliptic and parabolic equations on locally symmetric spaces with nonnegative sectional curvature, extending the results of Langford and Scheuer in \cite{langford2021concavity}. To the best of our knowledge, this is the first general concavity principle established on spaces with non-constant sectional curvature. 
% and in particular, our result partially answers a question raised by Korevaar in 1985 regarding the concavity of solutions to elliptic equations on manifolds with non-constant sectional curvature.
\end{abstract}
\maketitle
\section{Introduction} \label{sec:intro}
Concavity principles form a key theme in the study of elliptic and parabolic PDEs. They explain the extent to which the convexity of the domain influences the convexity of solutions to a PDE.
Numerous concavity principles are known on Euclidean space as well as spaces of constant curvature; see the discussion in \S\ref{subsec:background}. 
% \textcolor{red}{A classical example is the result of Brascamp and Lieb \cite{lieb} which states that the first Dirichlet eigenfunction of the Laplacian on a convex domain in $\R^n$ is log-concave.}

In this paper, we develop a concavity principle for certain nonlinear elliptic and parabolic PDEs on locally symmetric spaces with nonnegative sectional curvature. Here, locally symmetric is in the Riemannian sense, i.e. the curvature tensor is parallel. Examples include Euclidean spaces $\R^n$, spheres $\Sph^n$, projective spaces and Grassmannians over $\R$, $\C$ or $\mathbb{H}$, as well as any products of these spaces. In particular, the sectional curvature need not be constant. Extending concavity principles to such settings has been a long-standing problem. Our work builds on \cite{langford2021concavity} where the same results were established on $\Sph^n$.
% , and in turn generalizes the results of Korevaar \cites{Korevaar-Convex,Korevaar-Capillary} for Euclidean domains.
More precisely, we show
% not sure how to write this sentence simply
\begin{theorem}[Elliptic concavity principle] \label{thm:elliptic}
Let $\mathcal{M}$ be a locally symmetric space of dimension $n \geq 1$ with sectional curvatures lying in the interval $[0,A]$. Let $\Omega \subset \mathcal{M}$ be a domain with geodesically convex closure and diameter strictly less than $\frac{\pi}{\sqrt{A}}$. Let $\Gamma \subset \mathbb{R}^n$ be a symmetric, open and convex cone containing $\Gamma_{+}=\left\{\kappa \in \mathbb{R}^n: \kappa_i>0 \;\; \forall 1 \leq i \leq n\right\}$, and let $u \in C^2(\Omega) \cap C^1(\bar{\Omega})$ be a solution to
\begin{align}\label{eq:elliptic-pde-2}
    f\left(|\nabla u|,-\nabla^2 u\right)=b(\cdot, u,|\nabla u|) \quad \text { in } \Omega .
\end{align}
Suppose the function $f: [0,\infty) \times \mathcal{D}_{\bar{\Gamma}}$ has the following properties:
\begin{enumerate}
    \item For every $p \in[0, \infty), f(p, \cdot)$ is an isotropic function on $\mathcal{D}_{\bar{\Gamma}}$ (see Remark \ref{rmk:defn-D} for the definitions of isotropic and $\mathcal{D}_{\bar{\Gamma}}$),
    \item $f$ is non-decreasing in the first variable,
$$
p \leq q \Rightarrow f(p, \cdot) \leq f(q, \cdot),
$$
    \item non-decreasing in the second variable,
$$
\kappa_i \leq \lambda_i \quad \forall 1 \leq i \leq n \quad \Rightarrow \quad f\left(\cdot, \operatorname{diag}\left(\kappa_1, \ldots, \kappa_n\right)\right) \leq f\left(\cdot, \operatorname{diag}\left(\lambda_1, \ldots, \lambda_n\right)\right),
$$
\item convex in the second variable,
$$
f\left(\cdot, \frac{1}{2}(A+B)\right) \leq \frac{1}{2}(f(\cdot, A)+f(\cdot, B)) \quad \forall A, B \in \mathcal{D}_{\bar{\Gamma}} .
$$
\end{enumerate}
Suppose the function $b$ is
\begin{enumerate}
    \item strictly decreasing in the second variable,
    \item non-increasing in the third variable,
    \item jointly concave in the first two variables:
$$
b\left(\gamma_{x, y}(0), \frac{1}{2}(u(x)+u(y)), p\right) \geq \frac{1}{2} b(x, u(x), p)+\frac{1}{2} b(y, u(y), p),
$$
where $\gamma_{x,y}:[-1,1]\to \bar{\Omega}$ is the unique minimizing geodesic such that $\gamma_{x,y}(-1) = x$ and $\gamma_{x,y}(1) = y$.
\end{enumerate}
If furthermore for all $(x, y) \in \partial \Omega \times \bar{\Omega}$ there holds
\EQ{\label{eq:mat-bdry-condition}
D u_x\left(\dot{\gamma}_{x, y}(-1)\right)-D u_y\left(\dot{\gamma}_{x, y}(1)\right)>0,
}
then $u$ is concave.
\end{theorem}
Furthermore, as in \cite{Korevaar-Convex} we also establish a parabolic analog of Theorem \ref{thm:elliptic}.
\begin{theorem}[Parabolic concavity principle]\label{thm:parabolic}
Let $\mathcal{M}$ be a locally symmetric space of dimension $n \geq 1$ with sectional curvatures lying in the interval $[0,A]$. Let $\Omega \subset \mathcal{M}$ be a domain with geodesically convex closure and diameter strictly less than $\frac{\pi}{\sqrt{A}}$.
Suppose $u \in C^2(\Omega \times (0,T]) \cap C^1(\bar{\Omega} \times (0,T])$ solves
\begin{align} \label{eq:parabolic-pde}
    \partial_t u = -f(|\nabla u|,-\nabla^2 u) + b(\cdot,u,|\nabla u|) \quad \text{in } \Omega \times (0,T],
\end{align}
where $f$ and $b$ satisfy the same conditions as in Theorem \ref{thm:elliptic}. If $u(\cdot, 0)$ is concave and for all $(x, y, t) \in \partial \Omega \times \bar{\Omega}\times (0,T]$ there holds
\EQ{\label{eqn:parabolic-bcs}
D u_x\left(\dot{\gamma}_{x, y}(-1),t\right)-D u_y\left(\dot{\gamma}_{x, y}(1),t\right)>0,
}
then $u(\cdot,t)$ is concave for all $t\in (0,T]$.
\end{theorem}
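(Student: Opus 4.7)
The plan is to reduce the parabolic statement to the spatial estimates developed in the proof of Theorem~\ref{thm:elliptic} via a first-contact argument. Define the midpoint two-point function
\EQ{
    \mathcal{Z}(x,y,t) = u\bigl(\gamma_{x,y}(0),t\bigr) - \tfrac{1}{2}\bigl(u(x,t) + u(y,t)\bigr), \qquad (x,y,t) \in \bar\Omega\times\bar\Omega\times [0,T],
}
so that concavity of $u(\cdot,t)$ is equivalent to $\mathcal{Z}(\cdot,\cdot,t)\geq 0$. First I would assume for contradiction that this inequality fails, and after a small time penalization $\mathcal{Z}+\epsilon t$ to force strict touching, pick the first time $t_0$ and spatial points $(x_0,y_0)$ at which the penalized function attains the value zero. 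The hypothesis that $u(\cdot,0)$ is concave gives $t_0>0$; the strict boundary condition~\eqref{eqn:parabolic-bcs} rules out $(x_0,y_0) \in \partial(\bar\Omega\times\bar\Omega)$ because at a boundary point the first-order minimum condition along an inward variation would contradict the sign prescribed by~\eqref{eqn:parabolic-bcs}. At this interior first-contact point I acquire the standard parabolic information $\partial_t\mathcal{Z}\leq -\epsilon$, $\nabla_{(x,y)}\mathcal{Z}=0$, and $\nabla^2_{(x,y)}\mathcal{Z}\geq 0$.

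Next, I would insert the parabolic PDE~\eqref{eq:parabolic-pde} at $x_0$, $y_0$, and the geodesic midpoint $m_0=\gamma_{x_0,y_0}(0)$ to recast $\partial_t\mathcal{Z}\leq -\epsilon$ as
\EQ{
    -\epsilon \;\geq\; -\Bigl[f_m - \tfrac{1}{2}(f_x+f_y)\Bigr] \;+\; \Bigl[b_m - \tfrac{1}{2}(b_x+b_y)\Bigr].
}
The bracket containing $b$ is nonnegative by joint concavity and monotonicity of $b$, once the first-order equations $\nabla_{(x,y)}\mathcal{Z}=0$ are used to relate $|\nabla u|$ at the three points via parallel transport along $\gamma_{x_0,y_0}$. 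The bracket containing $f$ is the real work: one must show it is nonnegative, which is exactly the Hessian comparison proved in the elliptic case. Feeding $\nabla^2_{(x,y)}\mathcal{Z}\geq 0$ into the second variation of $u\circ\gamma$ along a one-parameter family of geodesics connecting variations of $x_0$ and $y_0$, and exploiting the isotropy, monotonicity and convexity of $f$, produces a pointwise matrix inequality between $-\nabla^2 u(m_0)$ and the average of $-\nabla^2 u(x_0)$ and $-\nabla^2 u(y_0)$ under appropriate parallel transports. Combining the two bracket estimates contradicts the displayed inequality, and letting $\epsilon\to 0$ closes the argument.

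The main obstacle is the Jacobi field analysis on $\CP^n$ underlying the Hessian comparison. Unlike on $\Sph^n$, where the curvature operator along a geodesic acts as a single scalar and the analysis of~\cite{langford2021concavity} transfers directly, the normal bundle of a geodesic $\gamma$ in $\CP^n$ splits $\gamma$-parallelly into the real line $\R\cdot J\dot\gamma$, where the sectional curvature equals $4$, and its orthogonal complement, where the sectional curvature equals $1$. Jacobi fields in these two factors therefore satisfy different ODEs, with sinusoidal solutions of two distinct frequencies, so geodesic spreading occurs at two different rates that must be tracked simultaneously when expressing $\nabla^2_{(x,y)}\mathcal{Z}$ block by block. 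Assembling these contributions into a matrix inequality compatible with the convexity of the isotropic function $f$ is the novel input needed beyond the constant-curvature setting; once this Fubini--Study Jacobi computation is carried out, the contradiction scheme described above proceeds exactly as in Theorem~\ref{thm:elliptic} and yields the parabolic concavity principle.
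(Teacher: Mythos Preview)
Your proposal is correct and follows essentially the same route as the paper: define the midpoint two-point function, use the boundary condition to force an interior spatial minimum, invoke the first- and second-order information there, and feed the $\CP^n$ Jacobi field analysis (the splitting of $N\gamma$ into the $J\dot\gamma$-line with curvature $4$ and its orthogonal complement with curvature $1$) into the Hessian comparison so that the structural hypotheses on $f$ and $b$ close the argument. The paper does exactly this, citing the elliptic Lemmas~\ref{lem:first-order-Zmin}--\ref{lem:second-order-Zmin} and Proposition~\ref{prop:K-vanishing} at the interior minimum.

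Two small remarks. First, the paper dispenses with your penalization $\mathcal{Z}+\epsilon t$: it simply takes the global minimum of $Z$ over $\bar\Omega\times\bar\Omega\times[0,T]$, obtains $\partial_t Z\le 0$ there, and then runs the same chain of inequalities as in the elliptic proof to conclude $b(z,u(z),|\nabla u(z)|)\le b(z,\tfrac12(u(x)+u(y)),|\nabla u(z)|)$; the \emph{strict} monotonicity of $b$ in its second argument then yields $Z\ge 0$ directly. Your penalized first-contact variant is of course equally valid (and has the mild advantage that only non-strict monotonicity of $b$ is used), but it is not needed under the stated hypotheses. Second, watch the sign when you say the $f$-bracket must be shown ``nonnegative'': the Hessian comparison together with monotonicity and convexity of $f$ gives $f_m\le\tfrac12(f_x+f_y)$, so it is $-[f_m-\tfrac12(f_x+f_y)]$ that is $\ge 0$, and this together with the nonnegative $b$-bracket contradicts $\partial_t\mathcal{Z}\le-\epsilon$.
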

% \begin{remark}[Conjugate points]
% Note that if the diameter of the domain $\Omega$ is small, i.e. $\operatorname{diam}(\Omega)< \frac{\pi}{\kappa}$ where $\kappa$ is the maximum sectional curvature, then the explicit expression for the Jacobi fields (see Lemma \ref{lem:Jacobi-explicit}) implies that $\Omega$ contains no conjugate points. Thus, the assumption that the domain contains no conjugate points can be relaxed by working on domains with small diameter.
% \end{remark}
\begin{remark}[Isotropic functions and the domain of $f$]\label{rmk:defn-D}
Let $\mathcal{V}$ be an $n$-dimensional real inner product space and $\mathrm{Sym}(\mathcal{V})$ the space of self-adjoint endomorphisms of $\mathcal{V}$.
A function $f: \mathrm{Sym}(\mathcal{V}) \to \R$ is called \emph{isotropic} if 
\EQ{\label{defn:isotropic}
f(\mathcal{W})=f\left(V^{-1} \circ \mathcal{W} \circ V\right)
}
for all $\mathcal{W} \in \mathrm{Sym}(\mathcal{V})$ and $V \in O(\mathcal{V})$. Consequently, the value of $f$ on $\mathcal{W} \in \mathrm{Sym}(\mathcal{V})$ depends only on the ordered set of eigenvalues $\{\kappa_1 \leq \ldots \leq \kappa_n\}$ of $\mathcal{W}$:
\begin{align}
    f(\mathcal{W}) = \tilde{f}\left(\kappa_1, \ldots, \kappa_n\right).
\end{align}
The domain $\mathcal{D}_{\bar{\Gamma}}$ of the functions $f(p,\cdot)$ in Theorem \ref{thm:elliptic} is defined as follows.
For a symmetric, open and convex cone $\Gamma \subset \R^n$ which contains the positive cone $\Gamma_{+}=\left\{\kappa \in \mathbb{R}^n: \kappa_i>0 \;\; \forall 1 \leq i \leq n\right\}$, we define $\mathcal{D}_{\bar{\Gamma}} \subset \operatorname{Sym}(\mathbb{R}^n)$ as the set of symmetric matrices with eigenvalues in $\bar{\Gamma}$.
\end{remark}

\begin{remark}
Examples of $f$ satisfying the assumptions of Theorem \ref{thm:elliptic} include the usual Laplacian, the trace of the matrix exponential, and certain weighted traces of the Hessian:
\begin{align}
    f(|\nabla u|,-\nabla^2 u) &= -\Delta u, \\
    f(|\nabla u|,-\nabla^2 u) &= \operatorname{tr}\left(\exp(-\nabla^2 u)\right), \\
    f(|\nabla u|,-\nabla^2 u) &= -\sum_{i=1}^n \lambda_i \kappa_i,
\end{align}
where $\{\kappa_1 \leq \ldots \leq \kappa_n\}$ is the ordered set of eigenvalues of $\nabla^2 u$, and $\lambda_i$ are constants with $\lambda_1 \leq \ldots \leq \lambda_n$.
\end{remark}

\subsection{Background} \label{subsec:background}
Korevaar in \cites{Korevaar-Capillary,Korevaar-Convex} showed that if $u$ solves
\begin{align}
    a^{ij}(\nabla u)\nabla_i \nabla_j u = b(x,u,\nabla u)
\end{align}
on a convex domain $\Omega \subset \R^n$ with appropriate conditions on $a^{ij}, b$ and boundary conditions on $u$, then $u$ is concave. He used this to reprove the classical Brascamp--Lieb result that the first eigenfunction of a convex domain in $\R^n$ is log-concave. Korevaar's results were later improved by Andrews and Clutterbuck \cite{andrews} who obtained a sharp log-concavity estimate for the first eigenfunction and used this to prove the fundamental gap conjecture for convex domains in $\R^n$.

Concavity principles have also been widely studied on Riemannian manifolds of constant sectional curvature. Langford and Scheuer \cite{langford2021concavity} proved a concavity principle for certain PDEs on $\Sph^n$ which, as mentioned earlier, our work further extends to locally symmetric space. Log-concavity of the first eigenfunction and fundamental gap estimates for convex domains in $\Sph^n$ have been established separately in \cites{seto,he,dai2021fundamental}. Other works establishing concavity principles on constantly-curved manifolds include for instance \cites{kennington,Kawohl,leefundamentalgap,shihcounterexample,maconvexity,leeparabolic}. Quasiconcavity (i.e. convexity of upper level sets) is another continually active topic: see e.g. \cites{rosayrudin,papadimitrakis,wanggeometric,maconvexity2,weinkove} and references therein, where the spaces considered are again constantly-curved.

Much less is known for manifolds of non-constant sectional curvature. The difficulty of extending concavity principles to such manifolds was already recognized by Korevaar in 1985 \cite{korevaar-book}, yet progress on this front has stalled until recently. In \cite{ishige2022power}, concavity principles for PDEs are proved for rotationally symmetric convex domains in Riemannian manifolds. For positively curved surfaces, log-concavity of the first eigenfunction has been shown using a barrier argument in \cite{surface-barrier} and a two-point maximum principle in \cite{surface-two-point}, from which fundamental gap estimates are derived. The very recent paper \cite{khan2024concavity} proves concavity for solutions of eigenvalue problems where the PDEs live on small conformal deformations of constantly-curved manifolds. In contrast, our result does not require either symmetry of the domain (such as rotational symmetry) or any special deformation of the metric, and does not impose restrictions on dimension. However, a limitation of our result is that it cannot be used to establish the log-concavity of the first eigenfunction on the sphere or more generally, on any locally symmetric space.

\subsection{Proof sketch} The proof of Theorem \ref{thm:elliptic} follows the framework of \cite{langford2021concavity}, where a maximum principle is established for the two-point function
\EQ{\label{defn:Z}
Z: \bar{\Omega} \times \bar{\Omega} & \rightarrow \mathbb{R}, \\
(x, y) & \mapsto u\left(\gamma_{x, y}(0)\right)-\frac{1}{2}\left(u\left(\gamma_{x, y}(-1)\right)+u\left(\gamma_{x, y}(1)\right)\right),
}
defined along the unique minimizing geodesic $\gamma_{x, y}$ joining any two points $x,y\in \bar{\Omega}.$ This requires differentiating $Z$ twice which in turn requires taking second spatial derivatives of the map
\begin{align}
\Gamma: \bar{\Omega} \times \bar{\Omega} \times[-1,1] & \rightarrow \bar{\Omega}, \\ (x, y, t) & \mapsto \gamma_{x, y}(t),
\end{align}
i.e. taking spatial derivatives of Jacobi fields along geodesics. A key fact needed in the proof
is that certain linear combinations of the second variations vanish at the midpoint $t=0$. On $\Sph^n$, it was shown in \cite{langford2021concavity} that most of these linear combinations vanish identically, making the others easier to analyze. On a general locally symmetric space $\mathcal{M}$ with nonnegative sectional curvature, most linear combinations do not vanish identically. However, we find that the locally symmetric condition yields a symmetry principle for these linear combinations, which in turn implies their vanishing at $t=0$. This combined with the assumptions of Theorem \ref{thm:elliptic} implies that $Z\geq 0$, which by a theorem of Jensen \cite{jensen1906fonctions} implies that $u$ is concave.

\subsection{Organization} The paper is organized as follows. In \S\ref{sec:cp-prelim} we recall various geometric facts about locally symmetric spaces and compute spatial derivatives of Jacobi fields along its geodesics. In \S\ref{sec:concavity-proof} we prove Theorems \ref{thm:elliptic} and \ref{thm:parabolic}. In \S\ref{sec:applications}, we provide ways to relax some of the assumptions in Theorems \ref{thm:elliptic} and \ref{thm:parabolic} and give examples of PDEs that are covered by our theorems.

\section{Jacobi fields on locally symmetric spaces} \label{sec:cp-prelim}

Let $(\mathcal{M}^n,\inner{\cdot}{\cdot})$ be a smooth Riemannian manifold of dimension $n \geq 1$ and $\nabla$ its Levi-Civita connection. For the Riemann curvature tensor, we take the convention
\begin{align}
    R(X,Y)Z = \nabla_X \nabla_Y Z - \nabla_Y \nabla_X Z - \nabla_{[X,Y]}Z.
\end{align}
We assume that $(\mathcal{M}^n,\inner{\cdot}{\cdot})$ is a locally symmetric space, i.e. $\nabla R = 0$. We also assume its sectional curvatures lie in the interval $[0,A]$, i.e. $0 \leq R(X,Y,Y,X) \leq A$ for all $x \in \mathcal{M}$ and orthonormal tangent vectors $X,Y \in T_x\mathcal{M}$.

% and $R$ its curvature We assume that  $(\mathcal{M}^n,g)$ be a Riemannian manifold

% Denote by $\inner{\cdot}{\cdot}$ the Fubini--Study (Riemannian) metric on $\CP^n$, $\nabla$ its Levi-Civita connection, and $J$ the complex structure which satisfies $J^2 = -\mathrm{Id}$ and $\inner{J(\cdot)}{J(\cdot)} = \inner{\cdot}{\cdot}$. For the Riemann curvature tensor, we take the convention

% so that for $\CP^n$ we have (see for example \cite{kobayashi}*{\S IX.7})
% \begin{align}
% \begin{aligned}
%     \inner{R(X,Y)Z}{W} &= \inner{X}{W}\inner{Y}{Z}-\inner{X}{Z}\inner{Y}{W} \\
%     &\quad - \inner{X}{JZ}\inner{Y}{JW} + \inner{X}{JW}\inner{Y}{JZ} - 2\inner{X}{JY}\inner{Z}{JW}. \label{eq:CPn-curv}
% \end{aligned}
% \end{align}
% Moreover $\nabla R \equiv 0$, i.e. $\CP^n$ is a locally symmetric space. The Riemannian sectional curvature of the tangent 2-plane spanned by orthonormal tangent vectors $X$ and $Y$ is
% \begin{equation} \label{eq:CPn-sec}
%     \kappa(X,Y) = \inner{R(X,Y)Y}{X} = 1+3\inner{X}{JY}^2.
% \end{equation}

% This section is devoted to a detailed study of Jacobi fields on $\CP^n$. However, all the results of this section generalize directly to complex space forms, i.e. K\"ahler manifolds with constant holomorphic sectional curvature $4c$. This is because the curvature tensor and sectional curvatures are also given by \eqref{eq:CPn-curv} and \eqref{eq:CPn-sec} except with additional factors of $c$ on the right, and $\nabla R=0$ remains true. The proofs of this section then adapt straightforwardly to incorporate any $c \in \R$ (regardless of sign).

\subsection{Setup and frame fixing} \label{subsec:setup}

We borrow the setup of \cite{langford2021concavity}. Let $\Omega \subset \mathcal{M}^n$ be a domain with geodesically convex closure $\bar{\Omega}$ and diameter $\mathrm{diam}(\Omega) < \frac{\pi}{\sqrt{A}}$. That is, each pair of distinct points in $\bar{\Omega}$ is joined by a unique minimizing geodesic in $\mathcal{M}^n$ and this geodesic segment is contained in $\bar{\Omega}$. Define
\begin{align}
    \Gamma: \bar{\Omega} \times \bar{\Omega} \times [-1,1] \to \bar{\Omega}, \quad (x,y,t) \mapsto \gamma_{x,y}(t),
\end{align}
where $\gamma_{x,y}$ is the unique minimizing geodesic with $\gamma_{x,y}(-1) = x$ and $\gamma_{x,y}(1) = y$. Hereafter we will consider the domain $\Omega$ and the points $x,y \in \bar{\Omega}$ as being fixed.
% and when appropriate we will consider $\Gamma$ as a function of only time, i.e. $\Gamma(t) := \Gamma(x,y,t)$ for $t \in [-1,1]$.
Define the Jacobi fields $\J_{x^\alpha}$ and $\J_{y^\alpha}$ along $\gamma_{x,y}$ by
\begin{equation} \label{eq:J-defns}
    \J_{x^\alpha}(t) = \frac{\del\Gamma}{\del x^\alpha}(x,y,t) \quad \text{and} \quad \J_{y^\alpha}(t) = \frac{\del\Gamma}{\del y^\alpha}(x,y,t),
\end{equation}
where the coordinate systems $(x^\alpha)$ near $x$ and $(y^\alpha)$ near $y$ are chosen later. The Jacobi fields $\J_{x^\alpha}$ and $\J_{y^\alpha}$ satisfy
\begin{equation} \label{eq:J-bdry-vals}
    \J_{x^\alpha}(-1) = \frac{\del}{\del x^\alpha}\Big|_x, \quad \J_{x^\alpha}(1) = 0, \quad \J_{y^\alpha}(-1) = 0, \quad \J_{y^\alpha}(1) = \frac{\del}{\del y^\alpha}\Big|_y.
\end{equation}
We also need the first spatial derivatives of $\J_{x^\alpha}$ and $\J_{y^\alpha}$ which are vector fields along $\gamma_{x,y}$ defined by
\begin{align}
    \begin{aligned} \label{eq:K}
        K_{x^\alpha x^\beta}(t) &= \nabla_{\del_{x^\beta}}\frac{\del\Gamma}{\del x^\alpha}(x,y,t), \\
        K_{x^\alpha y^\beta}(t) &= \nabla_{\del_{y^\beta}}\frac{\del\Gamma}{\del x^\alpha}(x,y,t), \\
        K_{y^\alpha x^\beta}(t) &= \nabla_{\del_{x^\beta}}\frac{\del\Gamma}{\del y^\alpha}(x,y,t), \\
        K_{y^\alpha y^\beta}(t) &= \nabla_{\del_{y^\beta}}\frac{\del\Gamma}{\del y^\alpha}(x,y,t).
    \end{aligned}
\end{align}
We have the symmetries $K_{x^\alpha x^\beta} = K_{x^\beta x^\alpha}$, $K_{x^\alpha y^\beta} = K_{y^\beta x^\alpha}$, etc.
Since $\Gamma(\cdot,\cdot,-1)$ and $\Gamma(\cdot,\cdot,1)$ are projections onto the first and second factors respectively, their second covariant derivatives vanish identically and hence
\begin{equation} \label{eq:K-bdry}
    K_{x^\alpha x^\beta}(\pm 1) = K_{x^\alpha y^\beta}(\pm 1) = K_{y^\alpha x^\beta}(\pm 1) = K_{y^\alpha y^\beta}(\pm 1) = 0.
\end{equation}
Since $x$ and $y$ are considered fixed, we will generally consider $\Gamma$ as a function of only time, i.e. $\Gamma(t):= \Gamma(x,y,t)$. Derivatives with respect to $t$ will be indicated using dots. With our conventions, $\J_{x^\alpha}$ and $\J_{y^\alpha}$ satisfy the Jacobi field equations
\begin{align}
    \ddot{\J}_{x^\alpha}(t) +  R(\J_{x^\alpha},\dot{\Gamma})\dot{\Gamma} &= 0, \\
    \ddot{\J}_{y^\alpha}(t) +  R(\J_{y^\alpha},\dot{\Gamma})\dot{\Gamma} &= 0.
\end{align}
We now fix a convenient frame to work in. Let $E_1 = \frac{\dot{\Gamma}}{|\dot{\Gamma}|}$, which has unit length and is parallel along $\Gamma$. Since $R(\cdot,\dot{\Gamma})\dot{\Gamma}$ is self-adjoint with respect to $\inner{\cdot}{\cdot}$, and $(E_1)_x \in T_x\mathcal{M}$ belongs to its kernel, we can complete it to an orthonormal basis $\{E_1,\ldots,E_n\}$ for $T_x\mathcal{M}$ that diagonalizes $R(\cdot,\dot{\Gamma})\dot{\Gamma}$ at the point $x$. Then there are numbers $\kappa_1, \ldots, \kappa_n \in \R$ such that at the point $x$,
\begin{align} \label{eq:kappa-defn}
    \inner{R(E_\alpha, \dot{\Gamma})\dot{\Gamma}}{E_\beta} = \kappa_\alpha|\dot{\Gamma}|^2 \delta_{\alpha\beta}.
\end{align}
Note that $\kappa_1 = 0$, and for $\alpha \geq 2$, the number $\kappa_\alpha$ is the sectional curvature of the 2-plane spanned by $E_1, E_\alpha \in T_x\mathcal{M}$. By the assumptions on sectional curvature, $0 \leq \kappa_\alpha \leq A$.

Now, extend the basis $\{E_1,\ldots,E_n\}$ along $\Gamma$ by parallel transport and continue to denote the resulting orthonormal frame by $\{E_1,\ldots,E_n\}$. Since $\nabla R = 0$ by assumption, and the vector fields $E_\alpha$ and $\dot{\Gamma}$ are parallel along $\Gamma$, the identity \eqref{eq:kappa-defn} remains true at all points on $\Gamma$, with the same numbers $\kappa_\alpha$.

Let $(x^\alpha)_{\alpha=1}^n$ be Riemannian normal coordinates at $x$ such that $(\frac{\del}{\del x^1},\frac{\del}{\del x^{\bar{1}}}, \ldots, \frac{\del}{\del x^{n}},\frac{\del}{\del x^{\bar{n}}})$ coincides with $\{E_1,\ldots,E_n\}$ at $x$.
Likewise, let $(y^\alpha)_{\alpha=1}^n$ be normal coordinates at $y$ such that $(\frac{\del}{\del y^1},\frac{\del}{\del y^{\bar{1}}}, \ldots, \frac{\del}{\del y^{n}},\frac{\del}{\del y^{\bar{n}}})$ coincides with $\{E_1,\ldots,E_n\}$ at $y$.
Thus
\begin{equation} \label{eq:Ei-bdry}
    E_\alpha(-1) = \frac{\del}{\del x^\alpha}\Big|_x, \quad E_\alpha(1) = \frac{\del}{\del y^\alpha}\Big|_y
\end{equation}
for all $\alpha=1,\ldots,n$.

% \begin{remark}[Indexing conventions]
%     When handling indices, impose the ordering
%     \begin{equation}
%         1 < \bar{1} < 2 < \bar{2} < \ldots < n < \bar{n},
%     \end{equation}
%     so that for instance $``\bar{1} \leq \alpha < 4$'' means $\alpha \in \{\bar{1},2,\bar{2},3,\bar{3}\}$. Also take $\bar{\bar{\alpha}} = \alpha$. In particular, at $x$ we have
%     \begin{align} \label{eq:JE}
%         JE_{\bar{\alpha}} = \begin{cases}
%             -E_\alpha & \text{if } \alpha \in \{1,\ldots,n\}, \\
%             E_\alpha & \text{if } \alpha \in \{\bar{1},\ldots,\bar{n}\}.
%         \end{cases}
%     \end{align}
% \end{remark}

With frames and coordinates thus chosen, we now write down expressions for the Jacobi fields $\J_{x^\alpha}$ and $\J_{y^\alpha}$ along $\Gamma$ defined in \eqref{eq:J-defns}. 
% Before proceeding, note that $\bar{\Omega} \subset \CP^n$ has no conjugate points. 
% Indeed, if $c \leq 0$ then the Riemannian sectional curvatures of $\CP^n$ lie in the nonpositive interval $[4c,c]$ and therefore $\CP^n$ has no conjugate points. If $c > 0$, then $\CP^n$ is $\CP^n$ up to scaling and 
% Indeed, in $\CP^n$ the first conjugate locus of a point coincides with its cut locus, see e.g. \cite{cheeger}*{Theorem 5.13}, and a geodesically convex domain cannot contain a pair of cut points.
% because geodesically convex domains cannot contain cut points. See Scholium 3.78 in Riemannian Geometry by Gallot, Hulin and Lafontaine.
Since the sectional curvatures lie in the interval $[0,A]$, the diameter bound $\mathrm{diam}(\Omega) < \frac{\pi}{\sqrt{A}}$ implies that $\bar{\Omega}$ contains no pair of conjugate points. Thus, Jacobi fields along geodesics in $\bar{\Omega}$ are uniquely determined by their boundary values. We use this fact repeatedly in the sequel. 
% Not needed anymore since we only care about c=1
% To state the next lemma, recall that for $k \in \R$, the generalized sine function $\sn_k$ is defined by
% \begin{align}
%     \sn_k(t) &= \begin{cases}
%         \frac{1}{k}\sin(\sqrt{k}t) & k > 0, \\
%         t & k=0, \\
%         \frac{1}{-k}\sinh(\sqrt{-k}t) & k < 0,
%     \end{cases}
% \end{align}
% \begin{lemma} \label{lem:Jacobi-explicit}
%     For each $\alpha \geq 1$ we have
%     \begin{gather}
%         \J_{x^\alpha}(t) = v_\alpha(1-t)E_\alpha(t), \quad \J_{y^\alpha}(t) = v_\alpha(1+t) E_\alpha(t)
%     \end{gather}
%     where
%     \begin{equation} \label{eq:valpha}
%         v_\alpha(t) = \begin{cases}
%             \frac{t}{2} & \text{if } \alpha=1, \\  \frac{\sn_{4c|\dot{\Gamma}|^2}(t)}{\sn_{4c|\dot{\Gamma}|^2}(2)} & \text{if } \alpha = \bar{1}, \\
%             \frac{\sn_{c\dot{\Gamma}|^2}(t)}{\sn_{c|\dot{\Gamma}|^2}(2)} & \text{if } \alpha \geq 2.
%         \end{cases}
%     \end{equation}
% \end{lemma}

\begin{lemma} \label{lem:Jacobi-explicit}
    For each $\alpha \geq 1$ we have
    \begin{gather}
        \J_{x^\alpha}(t) = v_\alpha(1-t)E_\alpha(t), \quad \J_{y^\alpha}(t) = v_\alpha(1+t) E_\alpha(t)
    \end{gather}
    where
    \begin{equation} \label{eq:valpha}
        v_\alpha(t) = \begin{cases}
            \frac{t}{2} & \text{if } \alpha=1, \\  \frac{\sin(\sqrt{\kappa_\alpha}|\dot{\Gamma}|t)}{\sin(2\sqrt{\kappa_\alpha}|\dot{\Gamma}|)} & \text{if } \alpha \geq 2.
            % \frac{\sin(|\dot{\Gamma}|t)}{\sin(2|\dot{\Gamma}|)}  & \text{if } \alpha \geq 2.
        \end{cases}
    \end{equation}
\end{lemma}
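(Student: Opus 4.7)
The plan is to exploit the parallel orthonormal frame $\mathcal{B}$ from Lemma \ref{lem:diagonalizing-frame} to decouple the vector-valued Jacobi equation into independent scalar ODEs in each frame direction. I would write the ansatz $\J_{x^\alpha}(t) = \sum_\beta f^\beta_\alpha(t)\, E_\beta(t)$. Since each $E_\beta$ is parallel along $\Gamma$, covariant differentiation acts only on the coefficients, and using that $\mathcal{B}$ diagonalizes $R(\cdot,\dot\Gamma)\dot\Gamma$ with eigenvalues $\lambda_1 = 0$, $\lambda_{\bar{1}} = 4|\dot\Gamma|^2$, and $\lambda_\beta = |\dot\Gamma|^2$ for $\beta \geq 2$, the Jacobi equation reduces to the decoupled scalar system
\EQ{
(f^\beta_\alpha)''(t) + \lambda_\beta\, f^\beta_\alpha(t) = 0, \quad \beta = 1,\bar{1},\ldots,n,\bar{n}.
}

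Next I would impose the boundary conditions and invoke uniqueness. Combining \eqref{eq:J-bdry-vals} with \eqref{eq:Ei-bdry} translates $\J_{x^\alpha}(-1) = \del/\del x^\alpha|_x$ and $\J_{x^\alpha}(1) = 0$ into the scalar data $f^\beta_\alpha(-1) = \delta^\beta_\alpha$ and $f^\beta_\alpha(1) = 0$. Since $\bar\Omega$ contains no conjugate points, each such scalar BVP is uniquely solvable; in particular, every off-diagonal coefficient $f^\beta_\alpha$ with $\beta \neq \alpha$ vanishes identically, so $\J_{x^\alpha}(t) = f^\alpha_\alpha(t)\, E_\alpha(t)$ is a scalar multiple of $E_\alpha$. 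The same reasoning applied to $\J_{y^\alpha}$ gives $\J_{y^\alpha}(t) = g^\alpha(t)\, E_\alpha(t)$, where $g^\alpha$ satisfies the same ODE but with boundary values swapped ($g^\alpha(-1)=0$, $g^\alpha(1)=1$).

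It then remains only to solve three elementary BVPs. For $\alpha = 1$, $f'' = 0$ with $f(-1)=1$, $f(1)=0$ yields $f^1_1(t) = \tfrac{1-t}{2} = v_1(1-t)$. For $\alpha = \bar 1$, the solution of $f'' + 4|\dot\Gamma|^2 f = 0$ with the same boundary data is $\frac{\sin(2|\dot\Gamma|(1-t))}{\sin(4|\dot\Gamma|)} = v_{\bar 1}(1-t)$. For $\alpha \geq 2$, the analogous computation with frequency $|\dot\Gamma|$ produces $\frac{\sin(|\dot\Gamma|(1-t))}{\sin(2|\dot\Gamma|)} = v_\alpha(1-t)$. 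Replacing $1-t$ by $1+t$ in each case gives the formulas for $\J_{y^\alpha}$.

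The main point I would want to check carefully is that the denominators $\sin(4|\dot\Gamma|)$ and $\sin(2|\dot\Gamma|)$ never vanish, since their vanishing would signal the presence of a conjugate point. This follows from the no-conjugate-points hypothesis: the length $2|\dot\Gamma|$ of $\Gamma$ is strictly less than $\pi/2$, the distance from any point in $\CP^n$ to its first conjugate point, so $4|\dot\Gamma| < \pi$ and both sines are strictly positive. Beyond this nonvanishing check, the lemma is a short linear constant-coefficient ODE computation, made tractable by the clean diagonalization afforded by Lemma \ref{lem:diagonalizing-frame}.
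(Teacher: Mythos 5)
Your proof is correct and follows essentially the same route as the paper: decompose the Jacobi field in the parallel diagonalizing frame, reduce to decoupled scalar constant-coefficient ODEs with boundary data $\delta_{\alpha\beta}$ and $0$, and solve. Your extra check that $\sin(4|\dot\Gamma|)$ and $\sin(2|\dot\Gamma|)$ do not vanish (via the absence of conjugate points, noted in the paper just before the lemma) is a worthwhile detail the paper leaves implicit.
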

\begin{proof}
    Since the $E_\beta$'s are parallel along $\Gamma$ and diagonalize $R(\cdot,\dot{\Gamma})\dot{\Gamma}$, we have for each $\beta \geq 1$
    \begin{align}
        \frac{d^2}{dt^2} \inner{\J_{x^\alpha}}{E_\beta} &= \inner{\ddot{\J}_{x^\alpha}}{E_\beta} = -\inner{R(\J_{x^\alpha},\dot{\Gamma})\dot{\Gamma}}{E_\beta} = -\inner{\J_{x^\alpha}}{E_\beta} \inner{R(E_\beta,\dot{\Gamma})\dot{\Gamma}}{E_\beta} = -\kappa_\beta \inner{\J_{x^\alpha}}{E_\beta}.
    \end{align}
    Solving this ODE for $\inner{\J_{x^\alpha}}{E_\beta}$ with boundary values $\inner{\J_{x^\alpha}}{E_\beta}(-1) = \delta_{\alpha\beta}$ and $\inner{\J_{x^\alpha}}{E_\beta}(1) = 0$ gives the claimed formulas for $\J_{x^\alpha}(t)$. The formulas for $\J_{y^\alpha}(t)$ are proved similarly.
\end{proof}

\subsection{Spatial derivatives of Jacobi fields}

The following combinations of the $K(t)$ vector fields defined in \eqref{eq:K} will appear when computing the Hessian of the concavity function $Z$:
\begin{align}
\begin{aligned} \label{eq:Kpm}
    \mathcal{K}^+_{\alpha\beta} &:= K_{x^\alpha x^\beta} + K_{y^\alpha y^\beta} + K_{x^\alpha y^\beta} + K_{y^\alpha x^\beta}, \\
    \mathcal{K}^-_{\alpha\beta} &:= K_{x^\alpha x^\beta} + K_{y^\alpha y^\beta} - K_{x^\alpha y^\beta} - K_{y^\alpha x^\beta}.
\end{aligned}
\end{align}
Note that $\mathcal{K}^\pm_{\alpha\beta} = \mathcal{K}^\pm_{\beta\alpha}$.
The aim of this subsection is to prove the following key fact.
\begin{proposition} \label{prop:K-vanishing}
    For each $\alpha,\beta \geq 1$, we have $\mathcal{K}^\pm_{\alpha\beta}(0) = 0$.
\end{proposition}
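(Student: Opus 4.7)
The plan is to derive a second-order ODE for each vector field $K_{\xi\eta}$, combine the four variants into ODEs for $\mathcal{K}^\pm_{\alpha\beta}$ whose forcings telescope, and then exhibit an odd-in-$t$ structure forcing vanishing at the midpoint.

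\textbf{Step 1: ODE for each $K_{\xi\eta}$.} Starting from the Jacobi equation $\ddot{\J}_\eta + R(\J_\eta,\dot{\Gamma})\dot{\Gamma}=0$ (for $\eta \in \{x^\alpha, y^\alpha\}$), I differentiate covariantly along a direction $\partial_\xi$ with $\xi \in \{x^\beta, y^\beta\}$. Commuting $\nabla_{\partial_\xi}$ past the two time derivatives generates curvature contributions via the standard formula for the pullback connection, while differentiating $R(\J_\eta,\dot{\Gamma})\dot{\Gamma}$ produces $(\nabla R)$-terms that vanish since $\CP^n$ is locally symmetric. Using also $\nabla_{\partial_\xi}\dot{\Gamma} = \dot{\J}_\xi$ and symmetrizing with the first Bianchi identity, each $K_{\xi\eta}$ should satisfy the inhomogeneous Jacobi equation
\EQ{
\ddot K_{\xi\eta} + R(K_{\xi\eta},\dot{\Gamma})\dot{\Gamma} = -2R(\J_\xi,\dot{\Gamma})\dot{\J}_\eta - 2R(\J_\eta,\dot{\Gamma})\dot{\J}_\xi,
}
with homogeneous boundary data $K_{\xi\eta}(\pm 1)=0$ from \eqref{eq:K-bdry}.

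\textbf{Step 2: Assembling $\mathcal{K}^\pm_{\alpha\beta}$.} Summing the four such equations with the signs prescribed by \eqref{eq:Kpm}, the left-hand sides combine into the Jacobi operator applied to $\mathcal{K}^\pm_{\alpha\beta}$, and the right-hand sides telescope to
\EQ{
\ddot{\mathcal{K}}^\pm_{\alpha\beta} + R(\mathcal{K}^\pm_{\alpha\beta},\dot{\Gamma})\dot{\Gamma} = -2R(\J^\pm_\alpha,\dot{\Gamma})\dot{\J}^\pm_\beta - 2R(\J^\pm_\beta,\dot{\Gamma})\dot{\J}^\pm_\alpha,
}
where $\J^\pm_\alpha := \J_{x^\alpha} \pm \J_{y^\alpha}$ and $\mathcal{K}^\pm_{\alpha\beta}(\pm 1)=0$.

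\textbf{Step 3: Parity and uniqueness.} Lemma \ref{lem:Jacobi-explicit} gives $\J^\pm_\alpha(t) = w^\pm_\alpha(t)E_\alpha(t)$, where $w^+_\alpha$ is an \emph{even} function of $t$ and $w^-_\alpha$ is \emph{odd}; in either sign case $w^\pm_\alpha(t)(w^\pm_\beta)'(t)$ is odd in $t$. Since $\mathcal{B}$ is parallel and $\nabla R = 0$, the coefficients $\langle R(E_\alpha,\dot{\Gamma})E_\beta, E_\gamma\rangle$ are constants along $\Gamma$, so expanding the forcing in the basis $\mathcal{B}$ produces scalar components that are all odd in $t$. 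Writing $\mathcal{K}^\pm_{\alpha\beta}(t) = \sum_\gamma k_\gamma(t)E_\gamma(t)$ and using the diagonalization from Lemma \ref{lem:diagonalizing-frame}, each component decouples to
\EQ{
\ddot k_\gamma + \lambda_\gamma k_\gamma = -f_\gamma(t), \qquad k_\gamma(\pm 1)=0,
}
with $\lambda_\gamma \in \{0, |\dot{\Gamma}|^2, 4|\dot{\Gamma}|^2\}$ and $f_\gamma$ odd. The absence of conjugate points in $\bar{\Omega}$ (noted just before Lemma \ref{lem:Jacobi-explicit}) ensures this Dirichlet problem is uniquely solvable; since the involution $k(t) \mapsto -k(-t)$ preserves both the equation (by oddness of $f_\gamma$) and the boundary conditions, uniqueness forces $k_\gamma$ itself to be odd, so $k_\gamma(0)=0$. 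Summing over $\gamma$ yields $\mathcal{K}^\pm_{\alpha\beta}(0)=0$.

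\textbf{Main obstacle.} The delicate part is Step 1: commuting $\nabla_{\partial_\xi}$ past two time derivatives produces curvature terms of several shapes, and putting them into the clean symmetric form above requires careful bookkeeping together with a decisive application of the first Bianchi identity. Once this is secured, the parity argument in Step 3 is remarkably robust and, crucially, never requires computing the genuinely nontrivial action of $R(E_\alpha,\dot{\Gamma})E_\beta$ in the frame $\mathcal{B}$---the very computation where $\CP^n$'s non-constant Riemannian sectional curvature would otherwise mix components and frustrate the direct ``linear combinations vanish identically'' strategy used on $\Sph^n$.
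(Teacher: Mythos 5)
Your proposal is correct, and its final step takes a genuinely different (and leaner) route than the paper. Steps 1 and 2 coincide with the paper's Lemma \ref{lem:K-ODE} and the assembly carried out inside the proof of Lemma \ref{lem:fundamental}: the forcing for $\mathcal{K}^\pm_{\alpha\beta}$ is indeed $-2R(\J^\pm_\alpha,\dot\Gamma)\dot\J^\pm_\beta-2R(\J^\pm_\beta,\dot\Gamma)\dot\J^\pm_\alpha$ with $\J^\pm_\alpha=\J_{x^\alpha}\pm\J_{y^\alpha}$. Where you diverge is after that: the paper first computes the frame components $\inner{R(E_\alpha,\dot\Gamma)E_\beta}{E_\gamma}$ explicitly (Lemma \ref{lem:curv-computation}), uses them to pin down the direction of each $\mathcal{K}^\pm_{\alpha\beta}$ (Proposition \ref{prop:K-directions}), and then runs a separate parity argument for each surviving case (Proposition \ref{prop:K-vanishing-cases}), including an explicit integral formula for the $\gamma=1$ component. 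You instead observe that for both signs the scalar prefactor $w^\pm_\alpha(w^\pm_\beta)'$ is odd, that the curvature coefficients are constant along $\Gamma$ (parallelism plus $\nabla R=0$), and that the resulting decoupled Dirichlet problems $\ddot k_\gamma+\lambda_\gamma k_\gamma=-f_\gamma$, $k_\gamma(\pm1)=0$ are uniquely solvable by the no-conjugate-points assumption, so the reflection $k\mapsto -k(-t)$ forces each $k_\gamma$ to be odd. This single uniform argument subsumes all of the paper's cases (including $\lambda_\gamma=0$) and, as you note, never needs the explicit values from Lemma \ref{lem:curv-computation} --- only their constancy in $t$. What you give up is the finer structural information of Proposition \ref{prop:K-directions} (which $\mathcal{K}^\pm_{\alpha\beta}$ vanish identically and the directions of the rest), but that information is not used elsewhere in the paper, so nothing is lost for the main theorems; your argument also makes the promised generalization to locally symmetric spaces more transparent. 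The only point you flag as delicate --- the derivation of the inhomogeneous Jacobi equation in Step 1 --- is exactly the content of Lemma \ref{lem:K-ODE}, which the paper imports from Langford--Scheuer, so your proof is complete modulo that standard computation.
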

We begin in the same way as \cite{langford2021concavity}, namely by writing down an ODE satisfied by the $K(t)$ vector fields. The proof only uses that $\nabla R = 0$ and hence still applies in our setting.
\begin{lemma}[\cite{langford2021concavity}*{Lemma 3.1}] \label{lem:K-ODE}
    For all $\alpha,\beta \geq 1$ there holds
    \begin{align}
        0 = \ddot{K}_{x^\alpha x^\beta} + R(K_{x^\alpha x^\beta},\dot{\Gamma})\dot{\Gamma} + 2R(\J_{x^\alpha},\dot{\Gamma})\dot{\J}_{x^\beta} + 2R(\J_{x^\beta},\dot{\Gamma})\dot{\J}_{x^\alpha},
    \end{align}
    and similarly for $K_{y^\alpha y^\beta}$, $K_{x^\alpha y^\beta}$ and $K_{y^\alpha x^\beta}$.
\end{lemma}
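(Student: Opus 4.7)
The plan is to derive the ODE for $K_{x^\alpha x^\beta}$ by covariantly differentiating the Jacobi equation
\[
\ddot{\J}_{x^\alpha} + R(\J_{x^\alpha},\dot{\Gamma})\dot{\Gamma} = 0
\]
with respect to the source coordinate $x^\beta$. The three remaining cases (for $K_{y^\alpha y^\beta}$, $K_{x^\alpha y^\beta}$, and $K_{y^\alpha x^\beta}$) follow by applying the same procedure to the appropriate Jacobi equation, swapping $x^\alpha$ or $x^\beta$ for $y^\alpha$ or $y^\beta$ as needed.

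The argument rests on three standard ingredients for the pullback connection along $\Gamma$: (i) the commutator formula
\[
\nabla_{\partial_{x^\beta}}\nabla_{\partial_t} W - \nabla_{\partial_t}\nabla_{\partial_{x^\beta}} W = R(\J_{x^\beta},\dot{\Gamma})W,
\]
valid for any vector field $W$ along $\Gamma$ since $[\partial_{x^\beta},\partial_t]=0$ on the parameter space; (ii) the torsion-free identity $\nabla_{\partial_{x^\beta}}\dot{\Gamma} = \nabla_{\partial_t}\J_{x^\beta} = \dot{\J}_{x^\beta}$; and (iii) the local symmetry $\nabla R\equiv 0$ on $\CP^n$, letting every covariant derivative pass through $R$. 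Applying $\nabla_{\partial_{x^\beta}}$ to $\ddot{\J}_{x^\alpha}=\nabla_{\partial_t}\nabla_{\partial_t}\J_{x^\alpha}$, invoking (i) twice, and then distributing the inner $\nabla_{\partial_t}$ across the resulting curvature term (using $\nabla R = 0$ and $\ddot{\Gamma}=0$), I would obtain
\[
\nabla_{\partial_{x^\beta}}\ddot{\J}_{x^\alpha} = \ddot{K}_{x^\alpha x^\beta} + R(\dot{\J}_{x^\beta},\dot{\Gamma})\J_{x^\alpha} + 2R(\J_{x^\beta},\dot{\Gamma})\dot{\J}_{x^\alpha}.
\]
Separately, applying $\nabla_{\partial_{x^\beta}}$ directly to $R(\J_{x^\alpha},\dot{\Gamma})\dot{\Gamma}$ and using $\nabla R = 0$ produces
\[
R(K_{x^\alpha x^\beta},\dot{\Gamma})\dot{\Gamma} + R(\J_{x^\alpha},\dot{\J}_{x^\beta})\dot{\Gamma} + R(\J_{x^\alpha},\dot{\Gamma})\dot{\J}_{x^\beta}.
\]

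Adding these two displays, the three leftover curvature terms $R(\dot{\J}_{x^\beta},\dot{\Gamma})\J_{x^\alpha}$, $R(\J_{x^\alpha},\dot{\J}_{x^\beta})\dot{\Gamma}$, and $R(\J_{x^\alpha},\dot{\Gamma})\dot{\J}_{x^\beta}$ form a cyclic sum to which the first Bianchi identity applies, giving
\[
R(\dot{\J}_{x^\beta},\dot{\Gamma})\J_{x^\alpha} + R(\J_{x^\alpha},\dot{\J}_{x^\beta})\dot{\Gamma} = R(\J_{x^\alpha},\dot{\Gamma})\dot{\J}_{x^\beta}.
\]
This collapses the three terms into $2R(\J_{x^\alpha},\dot{\Gamma})\dot{\J}_{x^\beta}$, yielding the claimed ODE. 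The main obstacle is the disciplined bookkeeping of the curvature contributions arising from every commutator swap of $\nabla_{\partial_{x^\beta}}$ with $\nabla_{\partial_t}$, and then recognizing the cyclic sum in the leftover terms so that Bianchi can collapse them; the computation closes cleanly precisely because $\nabla R \equiv 0$ on $\CP^n$ eliminates any residual derivative-of-$R$ terms.
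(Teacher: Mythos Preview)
Your proof is correct. The paper does not give its own proof of this lemma; it simply cites \cite{langford2021concavity}*{Lemma 3.1} and remarks that the argument ``only uses that $\nabla R = 0$''. Your derivation---covariantly differentiating the Jacobi equation in the $x^\beta$ direction, commuting $\nabla_{\partial_{x^\beta}}$ past $\nabla_{\partial_t}$ twice via the pullback curvature formula, using $\nabla R \equiv 0$ and $\ddot\Gamma = 0$ to distribute derivatives through $R$, and then collapsing the three residual curvature terms with the first Bianchi identity---is exactly the standard computation carried out in the cited reference, and every step checks out.
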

% This has the following consequence.
% \begin{corollary}[\cite{langford2021concavity}*{Lemma 3.2}] \label{cor:K-basic}
%     It holds that $K_{x^1x^1} = K_{y^1y^1} = K_{x^1y^1} = K_{y^1x^1} = 0$. In particular, $\mathcal{K}^+_{11} = \mathcal{K}^-_{11} = 0$ for all $t \in [-1,1]$.
%     % \begin{enumerate}[label=(\alph*)]
%     %     \item 
%     %     \item For each $\alpha \geq 1$, we have $\mathcal{K}^+_{1\alpha} = \mathcal{K}^+_{\alpha 1} = 0$ for all $t \in [-1,1]$.
%     % \end{enumerate}
% \end{corollary}
% \begin{proof}
%     Lemma \ref{lem:Jacobi-explicit} gives $\J_{x^1}(t) = \frac{1}{2}(1-t)E_1(t)$ and $\J_{y^1}(t) = \frac{1}{2}(1+t)E_1(t)$. Plugging in $K_{x^1x^1}$ into Lemma \ref{lem:K-ODE} reveals that $K_{x^1x^1}$ is a Jacobi field, but since it vanishes at $t=\pm 1$ and $\Omega$ has no conjugate points, it must vanish identically. The same argument works for $K_{y^1y^1}$, $K_{x^1y^1}$ and $K_{y^1x^1}$, which completes the proof. 
%     % For part (b), note that $\J_{x^1}+\J_{y^1} = E_1$ and $\dot{\J}_{x^1} + \dot{\J}_{y^1} = 0$. Then Lemma \ref{lem:K-ODE} yields
%     % \begin{align}
%     %     0 &= \ddot{\mathcal{K}}^+_{1\alpha} +  R(\mathcal{K}^+_{1\alpha},\dot{\Gamma})\dot{\Gamma} + 2 R(\J_{x^\alpha} + \J_{y^\alpha}, \dot{\Gamma})(\dot{\J}_{x^1} + \dot{\J}_{y^1}) = \mathcal{K}^+_{1\alpha} +  R(\mathcal{K}^+_{1\alpha},\dot{\Gamma})\dot{\Gamma},
%     % \end{align}
%     % so $\mathcal{K}^+_{1\alpha}=\mathcal{K}^+_{\alpha 1}$ are also Jacobi fields vanishing at endpoints, hence everywhere.
% \end{proof}

In \cite{langford2021concavity}, it is shown using the formula for the curvature tensor on $\Sph^n$ that the vector fields $\sum_{\alpha,\beta\geq 1}\mathcal{K}^\pm_{\alpha\beta}\xi^\alpha \xi^\beta$, defined for each $\xi \in \R^n$, all point in the $\dot{\Gamma}$ direction. The authors then show that $\sum_{\alpha,\beta\geq 1}\mathcal{K}^\pm_{\alpha\beta}(0)\xi^\alpha \xi^\beta=0$ for all $\xi \in \R^n$. On $\mathcal{M}$, the directionality result is not true, but the midpoint-vanishing result is true with an even stronger conclusion (Proposition \ref{prop:K-vanishing}). All that one has to do is directly work with the $\mathcal{K}^\pm_{\alpha\beta}$'s instead of contracting with vectors $\xi$, as we do below.
\begin{lemma} \label{lem:fundamental}
    For any $\alpha,\beta,\gamma \geq 1$ and at any time $t \in [-1,1]$, we have
    \begin{align}
        &\del_t^2 \left( \inner{\mathcal{K}^+_{\alpha\beta}}{E_\gamma}E_\gamma \right) + R\left( \inner{\mathcal{K}^+_{\alpha\beta}}{E_\gamma}E_\gamma, \dot{\Gamma} \right) \dot{\Gamma} \\
        &= -2(v_\alpha(1-t)+v_\alpha(1+t))(-\dot{v}_\beta(1-t) + \dot{v}_\beta(1+t)) \inner{R(E_\alpha, \dot{\Gamma})E_\beta}{E_\gamma}E_\gamma \\
        &\quad -2(v_\beta(1-t)+v_\beta(1+t))(-\dot{v}_\alpha(1-t) + \dot{v}_\alpha(1+t))\inner{R(E_\beta, \dot{\Gamma})E_\alpha}{E_\gamma}E_\gamma \label{eqn:pos-kappa},
    \end{align}
    and
    \begin{align}
        &\del_t^2 \left( \inner{\mathcal{K}^-_{\alpha\beta}}{E_\gamma}E_\gamma \right) + R\left( \inner{\mathcal{K}^-_{\alpha\beta}}{E_\gamma}E_\gamma, \dot{\Gamma} \right) \dot{\Gamma} \\
        &= 2(v_\alpha(1-t)-v_\alpha(1+t))(\dot{v}_\beta(1-t)+\dot{v}_\beta(1+t)) \inner{R(E_\alpha, \dot{\Gamma})E_\beta}{E_\gamma}E_\gamma \\
        &\quad +2(v_\beta(1-t)-v_\beta(1+t))(\dot{v}_\alpha(1-t)+\dot{v}_\alpha(1+t))\inner{R(E_\beta, \dot{\Gamma})E_\alpha}{E_\gamma}E_\gamma. \label{eqn:neg-kappa}
    \end{align}
\end{lemma}
\begin{proof}
    Recall that the $E_\gamma$'s are parallel and diagonalize $R(\cdot,\dot{\Gamma})\dot{\Gamma}$. This together with Lemma \ref{lem:K-ODE} give
    \begin{align}
        &\del_t^2 \left( \inner{\mathcal{K}^\pm_{\alpha\beta}}{E_\gamma}E_\gamma \right) + R\left( \inner{\mathcal{K}^\pm_{\alpha\beta}}{E_\gamma}E_\gamma, \dot{\Gamma} \right) \dot{\Gamma} \\
        &= \inner{\ddot{\mathcal{K}}^\pm_{\alpha\beta}}{E_\gamma}E_\gamma + \inner{\mathcal{K}^\pm_{\alpha\beta}}{E_\gamma} R(E_\gamma,\dot{\Gamma})\dot{\Gamma} \\
        &= -\inner{R(\mathcal{K}^\pm_{\alpha\beta},\dot{\Gamma})\dot{\Gamma}}{E_\gamma}E_\gamma + \inner{\mathcal{K}^\pm_{\alpha\beta}}{E_\gamma} R(E_\gamma,\dot{\Gamma})\dot{\Gamma} \\
        &\quad - 2 \inner{R(\J_{x^\alpha},\dot{\Gamma})\dot{\J}_{x^\beta}}{E_\gamma}E_\gamma - 2 \inner{R(\J_{x^\beta},\dot{\Gamma})\dot{\J}_{x^\alpha}}{E_\gamma}E_\gamma \\
        &\quad - 2 \inner{R(\J_{y^\alpha},\dot{\Gamma})\dot{\J}_{y^\beta}}{E_\gamma}E_\gamma - 2 \inner{R(\J_{y^\beta},\dot{\Gamma})\dot{\J}_{y^\alpha}}{E_\gamma}E_\gamma \\
        &\quad \mp 2 \inner{R(\J_{x^\alpha},\dot{\Gamma})\dot{\J}_{y^\beta}}{E_\gamma}E_\gamma \mp 2 \inner{R(\J_{y^\beta},\dot{\Gamma})\dot{\J}_{x^\alpha}}{E_\gamma}E_\gamma \\
        &\quad \mp 2 \inner{R(\J_{y^\alpha},\dot{\Gamma})\dot{\J}_{x^\beta}}{E_\gamma}E_\gamma \mp 2 \inner{R(\J_{x^\beta},\dot{\Gamma})\dot{\J}_{y^\alpha}}{E_\gamma}E_\gamma \\
        &= -\inner{\mathcal{K}^\pm_{\alpha\beta}}{E_\gamma} \inner{R(E_\gamma,\dot{\Gamma})\dot{\Gamma}}{E_\gamma}E_\gamma + \inner{\mathcal{K}^\pm_{\alpha\beta}}{E_\gamma} \inner{R(E_\gamma,\dot{\Gamma})\dot{\Gamma}}{E_\gamma}E_\gamma \\
        &\quad -2\inner{R(\J_{x^\alpha} \pm \J_{y^\alpha}, \dot{\Gamma})(\dot{\J}_{x^\beta} \pm \dot{\J}_{y^\beta})}{E_\gamma}E_\gamma - 2\inner{R(\J_{x^\beta} \pm \J_{y^\beta}, \dot{\Gamma})(\dot{\J}_{x^\alpha} \pm \dot{\J}_{y^\alpha})}{E_\gamma}E_\gamma.
    \end{align}
    The lemma follows from this and Lemma \ref{lem:Jacobi-explicit}.
\end{proof}

\begin{proof}[Proof of Proposition \ref{prop:K-vanishing}]
    % We will repeatedly invoke Lemmas \ref{lem:fundamental} and \ref{lem:curv-computation} without explicit mention. We give the proofs of (a) and (b) for $\mathcal{K}^+$; the proofs for $\mathcal{K}^-$ are the same.
    Let $\alpha,\beta,\gamma \geq 1$. Since $\nabla R = 0$, and the vector fields $E_\alpha$ and $\dot{\Gamma}$ are parallel along $\Gamma$, it follows that coefficients of the form $\inner{R(E_\alpha,\dot{\Gamma})E_\beta}{E_\gamma}$ are constant in $t$. We denote these constants by $c_{\alpha\beta\gamma}$. Then \eqref{eqn:pos-kappa} implies
    \begin{align}
        &\left[ \del_t^2 \inner{\mathcal{K}^+_{\alpha \beta}}{E_{\gamma}} + |\dot{\Gamma}|^2 \inner{\mathcal{K}^+_{\alpha \beta}}{E_{\gamma}}\right] E_{\gamma} \\
        &=\del_t^2 \left( \inner{\mathcal{K}^+_{\alpha \beta}}{E_{\gamma}}E_{\gamma} \right) +  R\left( \inner{\mathcal{K}^+_{\alpha \beta}}{E_{\gamma}}E_{\gamma}, \dot{\Gamma} \right) \dot{\Gamma} \\
        % &= -2(v_\alpha(1-t)+v_\alpha(1+t))(-\dot{v}_{\beta}(1-t) + \dot{v}_{\beta}(1+t)) \inner{ R(E_\alpha, \dot{\Gamma})E_{\beta}}{E_{\gamma}}E_{\gamma} \\
        % &\quad -2(v_{\beta}(1-t)+v_{\beta}(1+t))(-\dot{v}_\alpha(1-t) + \dot{v}_\alpha(1+t))\inner{ R(E_{\beta}, \dot{\Gamma})E_\alpha}{E_{\gamma}}E_{\gamma} \\
        &= -2c_{\alpha\beta\gamma}(v_\alpha(1-t)+v_\alpha(1+t))(-\dot{v}_{\beta}(1-t) + \dot{v}_{\beta}(1+t)) E_{\gamma} \\
        &\quad -2c_{\beta \alpha\gamma}(v_{\beta}(1-t)+v_{\beta}(1+t))(-\dot{v}_\alpha(1-t) + \dot{v}_\alpha(1+t)) E_{\gamma}. \label{eq:ccc}
    \end{align}
    This computation shows that the function $\phi(t) := \inner{\mathcal{K}^+_{\alpha\beta}}{E_{\gamma}}$ satisfies
    \begin{equation} \label{eq:phi-ODE}
        \ddot{\phi} + |\dot{\Gamma}|^2 \phi + \eta = 0,
    \end{equation}
    where
    \begin{align}
        \eta(t) &=c_{\alpha\beta\gamma}|\dot{\Gamma}|(v_\alpha(1-t)+v_\alpha(1+t))(-\dot{v}_{\beta}(1-t) + \dot{v}_{\beta}(1+t)) \\
        &\quad + c_{\beta \alpha\gamma}|\dot{\Gamma}|(v_{\beta}(1-t)+v_{\beta}(1+t))(-\dot{v}_\alpha(1-t) + \dot{v}_\alpha(1+t)).
    \end{align}
    But $\eta(-t) = -\eta(t)$, so the function $\psi(t) := \phi(-t)$ satisfies
    \begin{equation}
        \ddot{\psi} + |\dot{\Gamma}|^2 \psi - \eta = 0.
    \end{equation}
    Adding this to \eqref{eq:phi-ODE} gives the ODE
    \begin{equation}
        (\ddot{\phi} + \ddot{\psi}) + |\dot{\Gamma}|^2 (\phi + \psi) = 0.
    \end{equation}
    But $\phi$ and $\psi$ both vanish at $t = \pm 1$, so $\phi + \psi$ vanishes identically. Evaluating at $t=0$ gives $\inner{\mathcal{K}^+_{\alpha\beta}}{E_{\gamma}} = 0$. Since the direction $E_\gamma$ was arbitrary, this shows that $\mathcal{K}^+_{\alpha \beta}$ vanishes at $t=0.$ Next, using \eqref{eqn:neg-kappa} we get
    \begin{align}
        \left(\del_t^2 \inner{\mathcal{K}^-_{\alpha \beta}}{E_\gamma} + |\dot{\Gamma}|^2 \inner{\mathcal{K}^-_{\alpha \beta}}{E_\gamma} \right)E_\gamma &=\del_t^2 \left( \inner{\mathcal{K}^-_{\alpha \beta}}{E_\gamma}E_\gamma \right) +  R\left( \inner{\mathcal{K}^-_{\alpha \beta}}{E_\gamma}E_\gamma, \dot{\Gamma} \right) \dot{\Gamma} \\
        %  &= 2(v_\alpha(1-t)-v_\alpha(1+t))(\dot{v}_\beta(1-t)+\dot{v}_\beta(1+t)) \inner{R(E_\alpha, \dot{\Gamma})E_\beta}{E_\gamma}E_\gamma \\
        % &\quad +2(v_\beta(1-t)-v_\beta(1+t))(\dot{v}_\alpha(1-t)+\dot{v}_\alpha(1+t))\inner{R(E_\beta, \dot{\Gamma})E_\alpha}{E_\gamma}E_\gamma\\
         &= 2c_{\alpha \beta \gamma}(v_\alpha(1-t)-v_\alpha(1+t))(\dot{v}_\beta(1-t)+\dot{v}_\beta(1+t)) E_\gamma \\
        &\quad +2c_{\beta \alpha \gamma}(v_\beta(1-t)-v_\beta(1+t))(\dot{v}_\alpha(1-t)+\dot{v}_\alpha(1+t))E_\gamma.
    \end{align}
    Thus $\phi := \inner{\mathcal{K}^-_{\alpha \beta}}{E_\gamma}$ satisfies $\ddot{\phi}+|\dot{\Gamma}|^2\phi = \eta$, where $\eta$ is an odd function. Now arguing as before we get that $\mathcal{K}^-_{\alpha \beta}$ vanishes at $t=0$.
\end{proof}

% \begin{proof}[Proof of Proposition \ref{prop:K-vanishing}]
%     This follows from Proposition \ref{prop:K-vanishing-cases} and the remarks preceding its statement.
%     % Corollary \ref{cor:K-basic}, Proposition \ref{prop:K-directions} (b) and (d), and Proposition \ref{prop:K-vanishing-cases}.
% \end{proof}

\section{Concavity principles}\label{sec:concavity-proof}

\subsection{Proofs of main theorems}

We now use the results of the previous section to prove Theorems \ref{thm:elliptic} and \ref{thm:parabolic}. The arguments are essentially due to \cite{langford2021concavity} which is in turn based on \cite{Korevaar-Convex}, but due to notational differences we rewrite them here for the reader's convenience.

Let $\Omega \subset \mathcal{M}$ be a domain such that $\bar{\Omega}$ is geodesically convex, and recall the two-point function from \eqref{defn:Z}:
\begin{align}
    Z: \bar{\Omega} \times \bar{\Omega} & \rightarrow \mathbb{R}, \\
    (x, y) & \mapsto u\left(\gamma_{x, y}(0)\right)-\frac{1}{2}\left(u\left(\gamma_{x, y}(-1)\right)+u\left(\gamma_{x, y}(1)\right)\right).
\end{align}
Fix an interior minimum point $(x,y) \in \Omega \times \Omega$ of $Z$ (assuming for now that one exists) and let $z = \gamma_{x,y}(0)$ be the geodesic midpoint. Let $\{E_1,\ldots,E_n\}$ be the parallel orthonormal frame along $\gamma_{x,y}$ constructed in \S\ref{subsec:setup}. Using this frame we identify the tangent spaces to $\mathcal{M}$ at $x$, $y$ and $z$ each with $\R^n$. With this setup we have
\begin{lemma} \label{lem:first-order-Zmin}
    If $(x,y) \in \Omega \times \Omega$ is an interior critical point of $Z$, and $z = \gamma_{x,y}(0)$, then after identifying tangent spaces as above we have
    \begin{align}
        \nabla u(z) = V(\nabla u(x)) = V(\nabla u(y)),
    \end{align}
    where $V = \mathrm{diag}\left(\frac{1}{2v_\alpha(1)}\right)_{\alpha=1,\ldots,n}$.
\end{lemma}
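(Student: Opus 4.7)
The plan is to directly differentiate the two-point function $Z$ using the chain rule and read off the stated identities from the first-order conditions at the critical point. Writing
\[
Z(x,y) = u\bigl(\Gamma(x,y,0)\bigr) - \tfrac{1}{2}\bigl(u(x)+u(y)\bigr),
\]
I would use the definitions $\J_{x^\alpha}(t) = \partial\Gamma/\partial x^\alpha(x,y,t)$ and $\J_{y^\alpha}(t) = \partial\Gamma/\partial y^\alpha(x,y,t)$ to compute
\[
\frac{\partial Z}{\partial x^\alpha} = Du_z\!\bigl(\J_{x^\alpha}(0)\bigr) - \tfrac{1}{2}\, Du_x\!\left(\tfrac{\partial}{\partial x^\alpha}\Big|_x\right), \qquad \frac{\partial Z}{\partial y^\alpha} = Du_z\!\bigl(\J_{y^\alpha}(0)\bigr) - \tfrac{1}{2}\, Du_y\!\left(\tfrac{\partial}{\partial y^\alpha}\Big|_y\right).
\]

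Next, the explicit formulas in Lemma \ref{lem:Jacobi-explicit} give $\J_{x^\alpha}(0) = v_\alpha(1)\,E_\alpha(0) = \J_{y^\alpha}(0)$ for every $\alpha$. Combined with the choice of Riemannian normal coordinates such that $\partial/\partial x^\alpha|_x = E_\alpha(-1)$ and $\partial/\partial y^\alpha|_y = E_\alpha(1)$ (recall \eqref{eq:Ei-bdry}), the vanishing of the partials of $Z$ at the critical point $(x,y)$ becomes the system
\[
v_\alpha(1)\, Du_z\bigl(E_\alpha(0)\bigr) = \tfrac{1}{2}\, Du_x\bigl(E_\alpha(-1)\bigr) = \tfrac{1}{2}\, Du_y\bigl(E_\alpha(1)\bigr)
\]
for $\alpha = 1,\bar{1},\ldots,n,\bar{n}$.

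Finally, since each of the frames $\{E_\alpha(-1)\}$, $\{E_\alpha(0)\}$, $\{E_\alpha(1)\}$ is orthonormal, the scalars $Du_x(E_\alpha(-1))$, $Du_z(E_\alpha(0))$, $Du_y(E_\alpha(1))$ are precisely the components of $\nabla u(x)$, $\nabla u(z)$, $\nabla u(y)$ in $\R^{2n}$ under the identifications fixed before the statement. Solving the displayed equations for the components of $\nabla u(z)$ and reassembling yields $\nabla u(z) = V(\nabla u(x)) = V(\nabla u(y))$ with $V = \mathrm{diag}\bigl(1/(2v_\alpha(1))\bigr)$, as claimed.

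There is no genuine obstacle here; the lemma is essentially a bookkeeping consequence of the chain rule, the midpoint formula for the Jacobi fields, and the compatibility of the coordinate frames with the parallel frame $\mathcal{B}$. The only point worth emphasizing is that the diagonal form of $V$ hinges on the fact that $\{E_\alpha\}$ simultaneously diagonalizes the Jacobi operator along $\Gamma$, which is exactly the content of Lemma \ref{lem:diagonalizing-frame} and what makes $\J_{x^\alpha}(0)$ and $\J_{y^\alpha}(0)$ proportional to $E_\alpha(0)$ rather than mixing across indices.
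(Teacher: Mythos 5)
Your proposal is correct and is essentially identical to the paper's proof: both differentiate $Z$ along the coordinate directions, substitute $\J_{x^\alpha}(0)=\J_{y^\alpha}(0)=v_\alpha(1)E_\alpha(0)$ from Lemma \ref{lem:Jacobi-explicit}, and read off the componentwise relations $\nabla_{E_\alpha}u(z)=\tfrac{1}{2v_\alpha(1)}\nabla_{E_\alpha}u(x)=\tfrac{1}{2v_\alpha(1)}\nabla_{E_\alpha}u(y)$ using the frame identifications. No gaps.
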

\begin{proof}
    Differentiating $Z$ in the directions $E_\alpha \oplus 0$ and $0 \oplus E_\alpha$ at $(x,y)$, and using Lemma \ref{lem:Jacobi-explicit}, we obtain respectively for each $\alpha \geq 1$
    \begin{align}
        0 &= \nabla_{E_\alpha \oplus 0}Z = \nabla_{\J_{x^\alpha}(0)} u(z) - \frac{1}{2}\nabla_{E_\alpha} u(x) = v_\alpha(1) \nabla_{E_\alpha}u(z) - \frac{1}{2}\nabla_{E_\alpha} u(x), \\
        0 &= \nabla_{0 \oplus E_\alpha}Z = \nabla_{\J_{y^\alpha}(0)} u(z) - \frac{1}{2}\nabla_{E_\alpha} u(y) = v_\alpha(1) \nabla_{E_\alpha}u(z) - \frac{1}{2}\nabla_{E_\alpha} u(y).
    \end{align}
    Thus $\nabla_{E_\alpha}u(z) = \frac{1}{2v_\alpha(1)}\nabla_{E_\alpha}u(x) = \frac{1}{2v_\alpha(1)}\nabla_{E_\alpha}u(y)$ for each $\alpha \geq 1$. The lemma follows.
\end{proof}
\begin{remark} \label{rmk:Vcontracting}
    The expressions \eqref{eq:valpha} for $v_\alpha$ imply that $\frac{1}{2v_\alpha(1)} \leq 1$ for each $\alpha \geq 1$. Thus $V \leq I$, i.e. $V$ is a weakly contracting map on $\R^n$. Hence $|\nabla u(z)| \leq |\nabla u(x)| = |\nabla u(y)|$.
\end{remark}
\begin{lemma} \label{lem:second-order-Zmin}
    If $(x,y) \in \Omega \times \Omega$ is a local minimum of $Z$, and $z = \gamma_{x,y}(0)$, then in the sense of bilinear forms on $\R^n$ we have
    \begin{equation} \label{eq:second-order-1}
        -(\nabla^2 u)_z \leq -\frac{1}{2}((\nabla^2 u)_x + (\nabla^2 u)_y)(V(\cdot),V(\cdot))
    \end{equation}
    and
    \begin{equation} \label{eq:second-order-2}
        (\nabla^2 u)_x + (\nabla^2 u)_y \leq 0,
    \end{equation}
    where $V = \mathrm{diag}\left(\frac{1}{2v_\alpha(1)}\right)_{\alpha=1,\ldots,n}$.
\end{lemma}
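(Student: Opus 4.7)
The plan is to extract both inequalities from the positive semi-definiteness of $\mathrm{Hess}_{(x,y)} Z$, applied to test directions of the form $(\xi, \pm\xi) \in \R^{2n} \oplus \R^{2n}$, after rewriting the second variation of $Z$ in a form where Proposition \ref{prop:K-vanishing} can be invoked.

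First I would write down the second variation along a smooth two-parameter variation. Fix $(\xi,\eta) \in \R^{2n} \oplus \R^{2n}$, let $x_s := \exp_x(s\xi)$ and $y_s := \exp_y(s\eta)$, and set $\sigma(s) := \Gamma(x_s, y_s, 0)$. Since $x_s$ and $y_s$ are geodesics, the chain rule applied to $Z(x_s,y_s) = u(\sigma(s)) - \tfrac12(u(x_s)+u(y_s))$ gives
\begin{equation*}
    \mathrm{Hess}_{(x,y)} Z\bigl((\xi,\eta),(\xi,\eta)\bigr) = (\nabla^2 u)_z\bigl(\sigma'(0), \sigma'(0)\bigr) + \inner{\nabla u(z)}{\sigma''(0)} - \tfrac{1}{2}\bigl[(\nabla^2 u)_x(\xi,\xi) + (\nabla^2 u)_y(\eta,\eta)\bigr],
\end{equation*}
where $\sigma''(0)$ is the covariant acceleration of $\sigma$ in $\CP^n$.

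Next I would unfold $\sigma'(0)$ and $\sigma''(0)$ in terms of the Jacobi fields and the $K$-vector fields from \S\ref{sec:cp-prelim}. By definition, $\sigma'(0) = \sum_\alpha \xi^\alpha \J_{x^\alpha}(0) + \sum_\alpha \eta^\alpha \J_{y^\alpha}(0)$, which by Lemma \ref{lem:Jacobi-explicit} equals $\sum_\alpha (\xi^\alpha + \eta^\alpha) v_\alpha(1) E_\alpha = \tfrac{1}{2}V^{-1}(\xi+\eta)$ under the identification $T_zM \cong \R^{2n}$ via $\mathcal{B}$. For the acceleration, differentiating $\sigma'(s) = \sum_\alpha [\xi^\alpha \tfrac{\partial \Gamma}{\partial x^\alpha} + \eta^\alpha \tfrac{\partial \Gamma}{\partial y^\alpha}](x_s,y_s,0)$ covariantly along $\sigma$ and invoking the definitions \eqref{eq:K} yields
\begin{equation*}
    \sigma''(0) = \sum_{\alpha,\beta}\bigl[\xi^\alpha\xi^\beta K_{x^\alpha x^\beta}(0) + \xi^\alpha\eta^\beta K_{x^\alpha y^\beta}(0) + \eta^\alpha\xi^\beta K_{y^\alpha x^\beta}(0) + \eta^\alpha\eta^\beta K_{y^\alpha y^\beta}(0)\bigr].
\end{equation*}

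Finally I would test against $\eta = \xi$ and $\eta = -\xi$. In both cases the above sum collapses to $\sum_{\alpha,\beta}\xi^\alpha\xi^\beta \mathcal{K}^\pm_{\alpha\beta}(0)$, which vanishes by Proposition \ref{prop:K-vanishing}; hence the $\inner{\nabla u(z)}{\sigma''(0)}$ term drops out entirely. The choice $\eta = \xi$ yields $\sigma'(0) = V^{-1}\xi$ and the inequality $(\nabla^2 u)_z(V^{-1}\xi, V^{-1}\xi) \geq \tfrac{1}{2}[(\nabla^2 u)_x + (\nabla^2 u)_y](\xi,\xi)$; substituting $\xi \mapsto V\xi$ produces \eqref{eq:second-order-1}. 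The choice $\eta = -\xi$ kills $\sigma'(0)$ as well, leaving $-\tfrac{1}{2}[(\nabla^2 u)_x + (\nabla^2 u)_y](\xi,\xi) \geq 0$, which is \eqref{eq:second-order-2}. The main conceptual input is the midpoint-vanishing Proposition \ref{prop:K-vanishing}—where all of the $\CP^n$-specific curvature information has been absorbed—and everything else is just the chain rule applied to the two-point function.
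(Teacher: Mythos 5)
Your proposal is correct and follows essentially the same route as the paper: both compute the second variation of $Z$ via the Jacobi fields and the $K$-fields, kill the $\inner{\nabla u(z)}{\sigma''(0)}$ term using Proposition \ref{prop:K-vanishing}, and test against the diagonal directions $(\xi,\xi)$ (with the rescaling by $V$) and $(\xi,-\xi)$ to extract \eqref{eq:second-order-1} and \eqref{eq:second-order-2} respectively. The only cosmetic difference is that you phrase the computation through explicit geodesic variations and substitute $\xi\mapsto V\xi$ at the end, whereas the paper works in normal coordinates and inserts $a=b=V\xi$ from the start.
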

\begin{proof}
    We compute the Hessian of $Z$ at an interior point $(x,y) \in \Omega \times \Omega$. Let $W = a^\alpha \del_{x^\alpha} + b^\alpha \del_{y^\alpha} \in \Gamma(T(\Omega \times \Omega))$ be a vector field with $(a^1,\ldots,a^n),(b^1,\ldots,b^n) \in \R^n$ constant. Then in normal coordinates at $(x,y)$ we have
    \begin{align}
        (\nabla^2 Z)(W,W) &= \nabla_W \nabla_W Z = (a^\alpha \del_{x^\alpha} + b^\alpha \del_{y^\alpha}) (a^\beta \del_{x^\beta} + b^\beta \del_{y^\beta}) Z(x,y) \\
        &= (a^\alpha \del_{x^\alpha} + b^\alpha \del_{y^\alpha}) \cdot \left[ a^\beta \left(\nabla u|_z(\J_{x^\beta}) - \frac{1}{2}\del_{x^\beta}u(x)\right) + b^\beta \left( \nabla u|_z(\J_{y^\beta}) - \frac{1}{2}\del_{y^\beta} u(y) \right) \right] \\
        &= a^\alpha a^\beta \left[ \nabla^2 u|_z(\J_{x^\alpha},\J_{x^\beta}) + \nabla u|_z(K_{x^\alpha x^\beta}) - \frac{1}{2}\del_{x^\alpha}\del_{x^\beta} u(x) \right] \\
        &\quad + a^\alpha b^\beta \left[ \nabla^2 u|_z(\J_{x^\alpha},\J_{y^\beta}) + \nabla u|_z(K_{x^\alpha y^\beta}) \right] \\
        &\quad + b^\alpha a^\beta \left[ \nabla^2 u|_z(\J_{y^\alpha},\J_{x^\beta}) + \nabla u|_z(K_{y^\alpha x^\beta}) \right] \\
        &\quad + b^\alpha b^\beta \left[ \nabla^2 u|_z(\J_{y^\alpha},\J_{y^\beta}) + \nabla u|_z(K_{y^\alpha y^\beta} - \frac{1}{2} \del_{y^\alpha} \del_{y^\beta} u(y) \right] \\
        &= (\nabla^2 u)|_z(a^\alpha \J_{x^\alpha} + b^\alpha \J_{y^\alpha}, a^\beta \J_{x^\beta} + b^\beta \J_{y^\beta}) - \frac{1}{2}(a^\alpha a^\beta \nabla^2_{\alpha\beta} u(x) + b^\alpha b^\beta \nabla^2_{\alpha\beta} u(y)) \\
        &\quad + \nabla u|_z(a^\alpha a^\beta K_{x^\alpha x^\beta} + a^\alpha b^\beta K_{x^\alpha y^\beta} + b^\alpha a^\beta K_{y^\alpha x^\beta} + b^\alpha b^\beta K_{y^\alpha y^\beta}). \label{eq:HessZ-formula}
    \end{align}
    To prove \eqref{eq:second-order-1}, let $\xi \in \R^n$ be arbitrary and let $a = b = V\xi$. Lemma \ref{lem:Jacobi-explicit} gives
    \begin{align}
        a^\alpha \J_{x^\alpha} + b^\alpha \J_{y^\alpha} &= \sum_\alpha (V\xi)^\alpha (v_\alpha(1-t)+v_\alpha(1+t)) E_\alpha(t) = \sum_\alpha \frac{v_\alpha(1-t)+v_\alpha(1+t)}{2v_\alpha(1)} \xi^\alpha E_\alpha(t).
    \end{align}
    At the midpoint $z$ (i.e. $t=0$), all factors on the right are one and we have
    \begin{equation} \label{eq:31}
        (\nabla^2 u)|_z(a^p \J_{x^p} + b^p \J_{y^p}, a^q \J_{x^q} + b^q \J_{y^q}) = (\nabla^2 u)|_z(\xi,\xi).
    \end{equation}
    Also, we have
    \begin{align}
        a^\alpha a^\beta \nabla^2_{\alpha\beta}u(x) + b^\alpha b^\beta \nabla^2_{\alpha\beta}u(y) &= (\nabla^2 u)_x(a,a) + (\nabla^2 u)_y(b,b) \\
        &= \left((\nabla^2 u)_x + (\nabla^2 u)_y\right)(V\xi,V\xi). \label{eq:32}
    \end{align}
    Proposition \ref{prop:K-vanishing} says that all $\mathcal{K}^+_{\alpha\beta} = 0$ at $t=0$. So at the point $z$,
    \begin{align}
        a^\alpha a^\beta K_{x^\alpha x^\beta} + a^\alpha b^\beta K_{x^\alpha y^\beta} + b^\alpha a^\beta K_{y^\alpha x^\beta} + b^\alpha b^\beta K_{y^\alpha y^\beta} = \sum_{\alpha,\beta} \frac{1}{4v_\alpha(1)v_\beta(1)} \xi^\alpha \xi^\beta \mathcal{K}^+_{\alpha\beta} = 0. \label{eq:33}
    \end{align}
    Substituting \eqref{eq:31}, \eqref{eq:32} and \eqref{eq:33} into \eqref{eq:HessZ-formula}, we get
    \begin{align}
        (\nabla^2 Z)(W,W) &= (\nabla^2 u)|_z(\xi,\xi) - \frac{1}{2}\left((\nabla^2 u)_x + (\nabla^2 u)_y \right)(V\xi,V\xi).
    \end{align}
    Since $(x,y)$ is an interior local minimum of $Z$, we have $\nabla^2 Z(W,W) \geq 0$, and \eqref{eq:second-order-1} follows.

    To prove \eqref{eq:second-order-2}, let $\xi \in \R^n$ be arbitrary and let $a = -b = \xi$. Then
    \begin{equation}
        a^\alpha \J_{x^\alpha} + b^\alpha \J_{y^\alpha}  = \xi^\alpha (\J_{x^\alpha} - \J_{y^\alpha}) = \xi^\alpha (v_\alpha(1-t)-v_\alpha(1+t))E_\alpha(t)
    \end{equation}
    which vanishes at the point $z$, i.e. when $t=0$. Thus, \eqref{eq:HessZ-formula} becomes
    \begin{align}
        0 &\leq (\nabla^2 Z)(W,W) = -\frac{1}{2}((\nabla^2 u)_x + (\nabla^2 u)_y)(\xi,\xi) + \nabla u|_z\left( \sum_{\alpha,\beta} \xi^\alpha \xi^\beta \mathcal{K}^-_{\alpha\beta} \right) \\
        &= -\frac{1}{2}((\nabla^2 u)_x + (\nabla^2 u)_y)(\xi,\xi)
    \end{align}
    where the last equality is Proposition \ref{prop:K-vanishing}.
\end{proof}

\begin{proof}[Proof of Theorem \ref{thm:elliptic}]
    Let $(x,y) \in \bar{\Omega} \times \bar{\Omega}$ be a minimum for $Z$. We claim that $(x,y) \in \Omega \times \Omega$. Indeed, if this is false, say $x \in \del\Omega$ without loss of generality, then define the function $c(s) = Z(\gamma_{x,y}(-1+s), \gamma_{x,y}(1-s))$. By \eqref{eq:mat-bdry-condition} we have
    \begin{align}
        \dot{c}(0) = -Du_x(\dot{\gamma}_{x,y}(-1)) +Du_y(\dot{\gamma}_{x,y}(1)) < 0.
    \end{align}
    Thus, for small $s > 0$ the value of $Z$ at the point $(\gamma_{x,y}(-1+s),\gamma_{x,y}(1-s)) \in \Omega \times \Omega$ is strictly less than $Z(x,y)$, contradicting minimality. Hence $(x,y) \in \Omega \times \Omega$.
    
    From Lemma \ref{lem:second-order-Zmin} and \cite{langford2021concavity}*{Lemma 2.1} (to apply this correctly we use that $V$ is a contraction, see Remark \ref{rmk:Vcontracting}), we have
    \begin{align} \label{eq:15091}
        -(\nabla^2 u)_z \leq -\frac{1}{2}((\nabla^2 u)_x + (\nabla^2 u)_y)(V(\cdot),V(\cdot)) \leq -\frac{1}{2}((\nabla^2 u)_x + (\nabla^2 u)_y).
    \end{align}
    Combining with the hypotheses on $f$ and $b$ yields the following chain of inequalities:
    \begin{alignat}{2}
        b(z,u(z),|\nabla u(z)|) &= f(|\nabla u(z)|, -\nabla^2 u(z)) & \text{(PDE for $u$)} \\
        &\leq f\left(|\nabla u(z)|, -\frac{1}{2}((\nabla^2 u)_x + (\nabla^2 u)_y)\right) \quad& \text{($f \uparrow$ in second slot, and \eqref{eq:15091})} \\
        &\leq \frac{1}{2}f\left(|\nabla u(z)|, -(\nabla^2 u)_x\right) + \frac{1}{2}f\left(|\nabla u(z)|, -(\nabla^2 u)_y\right) \quad& \text{($f$ convex in second slot)} \\
        &\leq \frac{1}{2}f\left(|\nabla u(x)|, -(\nabla^2 u)_x\right) + \frac{1}{2}f\left(|\nabla u(y)|, -(\nabla^2 u)_y\right) \quad& \text{($f \uparrow$ in first slot, Remark \ref{rmk:Vcontracting})} \\
        &= \frac{1}{2}b(x,u(x),|\nabla u(x)|) + \frac{1}{2}b(y,u(y),|\nabla u(y)|) \quad& \text{(PDE for $u$)} \\
        &\leq \frac{1}{2}b(x,u(x),|\nabla u(z)|) + \frac{1}{2}b(y,u(y),|\nabla u(z)|) \quad & \text{($b \downarrow$ in third slot, Remark \ref{rmk:Vcontracting})} \\
        &\leq b\left(z, \frac{1}{2}(u(x)+u(y)), |\nabla u(z)| \right). \quad &\text{($b$ jointly concave in slots 1 \& 2)}
    \end{alignat}
    Finally, since $b$ is strictly decreasing in the second slot, it follows that $u(z) \geq \frac{1}{2}(u(x)+u(y))$, i.e. $Z(x,y) \geq 0$. Thus $u$ is midpoint-concave, which implies by a result of Jensen \cite{jensen1906fonctions} that $u$ is concave.
\end{proof}

\begin{proof}[Proof of Theorem \ref{thm:parabolic}]
Define the parabolic two-point function,
\begin{align*}
    Z(x,y,t) = u(\gamma_{x,y}(0),t)-\frac{1}{2}\left( u(\gamma_{x,y}(-1),t)+ u(\gamma_{x,y}(1),t)\right).
\end{align*}
By the same argument as in the proof of Theorem \ref{thm:elliptic}, the boundary condition \eqref{eqn:parabolic-bcs} implies $Z$ can only attain a minimum at a spatial interior point $(x,y,t_0) \in \Omega \times \Omega\times [0,T]$. If $t_0=0$ then $Z(x,y,t_0)\geq 0$, hence $u(\cdot, 0)$ is concave and we are done. Otherwise $t_0\in (0, T]$, and at $(x,y,t_0)$ we have
\begin{align}
    \partial_t Z \leq 0
\end{align}
and
\begin{align} \label{eq:second-order-5}
    -(\nabla^2 u)_z \leq -\frac{1}{2}((\nabla^2 u)_x + (\nabla^2 u)_y)(V(\cdot),V(\cdot)) \leq -\frac{1}{2}((\nabla^2 u)_x + (\nabla^2 u)_y),
\end{align}
where $V = \mathrm{diag}\left(\frac{1}{2v_\alpha(1)}\right)_{\alpha=1,\ldots,n}$ and \eqref{eq:second-order-5} is justified similarly to \eqref{eq:15091} above. Thus, using the hypotheses on $f$ and $b$, at time $t=t_0$ we get
\begin{align}
    b\left(z, u(z),\left|\nabla u(z) \right|\right) &= \partial_t u(z) +f\left(\left|\nabla u(z)\right|,-\nabla^2 u(z)\right)\\
    &\leq \frac{1}{2}\left(\partial_t u(x) +\partial_t u(y)\right)+\frac{1}{2}f\left(|\nabla u(x)|, -(\nabla^2 u)_x\right) + \frac{1}{2}f\left(|\nabla u(y)|, -(\nabla^2 u)_y\right)  \\ 
    &= \frac{1}{2}b(x,u(x),|\nabla u(x)|) + \frac{1}{2}b(y,u(y),|\nabla u(y)|) \\
    &\leq \frac{1}{2}b(x,u(x),|\nabla u(z)|) + \frac{1}{2}b(y,u(y),|\nabla u(z)|) \\ 
    & \leq b\left(z, \frac{1}{2}\left(u(x)+u(y)\right),\left|\nabla u(z)\right|\right).
\end{align}
Finally, since $b$ is strictly decreasing in the second slot, it follows that $u(z) \geq \frac{1}{2}(u(x)+u(y))$ at $t=t_0$ and thus $Z(x,y,t_0)\geq 0$. This implies that $u(\cdot,t)$ is concave for each $t \in (0,T]$, as desired.
\end{proof}

\section{Boundary conditions and examples} \label{sec:applications}

In this section, we mention some potential applications of Theorems \ref{thm:elliptic} and \ref{thm:parabolic}. To make our results more amenable to applications, we will first relax structural conditions on the PDEs and determine a suitable alternative for the boundary condition \eqref{eq:mat-bdry-condition}. As before, we take $\Omega \subset \mathcal{M}$ to have geodesically convex closure.

\subsection{Relaxing structural assumptions}

In this subsection we take $f(|\nabla u|,-\nabla^2 u) = -\Delta u$ so that the PDEs \eqref{eq:elliptic-pde-2} and \eqref{eq:parabolic-pde} become semilinear elliptic and parabolic equations respectively. We will show that Theorems \ref{thm:elliptic} and \ref{thm:parabolic} still hold when the assumption that $b$ is \emph{strictly} decreasing in the second variable is relaxed to the assumption that it is \emph{non-increasing} in that variable. In the elliptic case this follows from a perturbation lemma in \cite{Korevaar-Convex}, which we recall below.

\begin{lemma} \label{lem:perturbation}
    Suppose $\Omega' \subset\subset \Omega$ has smooth boundary and let $u \in C^2(\Omega)$ be a solution to 
    % since we are working in the interior of the big domain we only require \Omega' to be smooth and that's good enough since u is C^2 on the big domain
    \begin{align} \label{eq:Mat-PDE-special}
        -\Delta u = b(x,u,|\nabla u|^2)
    \end{align}
    where $b: \bar{\Omega} \times \R \times [0,\infty) \to \R$ is as in Theorem \ref{thm:elliptic}, except that we may allow $b$ to be \emph{non-increasing} in the second variable. Assume also that $b$ is smooth up to the boundary of its domain. Then for small enough $0<\varepsilon<\varepsilon_1$ there exists a solution $v^\varepsilon$ to the the perturbed problem
    \begin{align} \label{eq:perturbed-PDE}
        \begin{cases}
            -\Delta v^\varepsilon = b(x,v^\varepsilon,|\nabla v^\varepsilon|^2) - \varepsilon v^\varepsilon &\text{in } \Omega', \\
            v^\varepsilon = u &\text{on } \del\Omega',
        \end{cases}
    \end{align}
    such that $v^\varepsilon = u+\varepsilon w^\varepsilon$, with $\|w^\varepsilon\|_{C^{2,\alpha}(\Omega')} \leq M$ and $M$ being independent of $\varepsilon$.
\end{lemma}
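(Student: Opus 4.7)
The plan is to realize $w^\varepsilon$ as the output of the implicit function theorem in Hölder spaces, applied to an equation obtained by substituting the ansatz $v^\varepsilon = u + \varepsilon w^\varepsilon$ into \eqref{eq:perturbed-PDE} and dividing by $\varepsilon$.

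Plugging $v^\varepsilon = u + \varepsilon w^\varepsilon$ into \eqref{eq:perturbed-PDE}, subtracting the equation \eqref{eq:Mat-PDE-special} satisfied by $u$, and dividing by $\varepsilon$ yields the equivalent problem
\begin{align*}
-\Delta w^\varepsilon = G(\varepsilon, w^\varepsilon) - u - \varepsilon w^\varepsilon \quad \text{in } \Omega', \qquad w^\varepsilon = 0 \text{ on } \del\Omega',
\end{align*}
where, writing $b_s$ and $b_q$ for the partial derivatives of $b$ in its second and third slots,
\begin{align*}
G(\varepsilon, w) = \int_0^1 \Bigl[b_s\bigl(x, u + t\varepsilon w, |\nabla u + t\varepsilon \nabla w|^2\bigr)\, w + 2 b_q\bigl(x, u + t\varepsilon w, |\nabla u + t\varepsilon \nabla w|^2\bigr)\, (\nabla u + t\varepsilon \nabla w) \cdot \nabla w \Bigr]\, dt.
\end{align*}
This integral representation removes the apparent singularity at $\varepsilon = 0$ and displays $G$ as a smooth map from $\R \times C^{2,\alpha}_0(\overline{\Omega'})$ into $C^{0,\alpha}(\overline{\Omega'})$; in particular $G(0, w) = b_s(x, u, |\nabla u|^2)\, w + 2 b_q(x, u, |\nabla u|^2)\, \nabla u \cdot \nabla w$ is linear in $w$.

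Define $F: (-\varepsilon_0, \varepsilon_0) \times C^{2,\alpha}_0(\overline{\Omega'}) \to C^{0,\alpha}(\overline{\Omega'})$ by $F(\varepsilon, w) := -\Delta w - G(\varepsilon, w) + u + \varepsilon w$. The equation $F(0, w^0) = 0$ is then the linear Dirichlet problem
\begin{align*}
-\Delta w^0 - 2 b_q(x, u, |\nabla u|^2)\, \nabla u \cdot \nabla w^0 + c(x)\, w^0 = -u \quad \text{in } \Omega', \qquad w^0 = 0 \text{ on } \del\Omega',
\end{align*}
whose zeroth-order coefficient $c := -b_s(x, u, |\nabla u|^2) \geq 0$ is nonnegative precisely because $b$ is \emph{non-increasing} in its second slot—this is where the relaxed structural hypothesis enters. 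Classical Schauder theory then provides a unique solution $w^0 \in C^{2,\alpha}(\overline{\Omega'})$, and the partial Fréchet derivative $D_w F(0, w^0)$ acts on $h$ by exactly the same linear operator, hence is an isomorphism $C^{2,\alpha}_0(\overline{\Omega'}) \to C^{0,\alpha}(\overline{\Omega'})$.

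The implicit function theorem then produces, for all $0 < \varepsilon < \varepsilon_1$, a unique $w^\varepsilon \in C^{2,\alpha}_0(\overline{\Omega'})$ with $F(\varepsilon, w^\varepsilon) = 0$ and $w^\varepsilon \to w^0$ in $C^{2,\alpha}(\overline{\Omega'})$ as $\varepsilon \to 0$, which yields the uniform bound $\|w^\varepsilon\|_{C^{2,\alpha}(\Omega')} \leq \|w^0\|_{C^{2,\alpha}(\Omega')} + 1 =: M$. The resulting $v^\varepsilon := u + \varepsilon w^\varepsilon$ is the desired solution of \eqref{eq:perturbed-PDE}. The main technical point is verifying $C^1$ dependence of $F$ on $(\varepsilon, w)$ jointly, uniformly down to $\varepsilon = 0$; the integral formula for $G$ is engineered for exactly this purpose, and the remaining verifications are routine given the smoothness of $b$ up to the boundary of its domain and the Banach-algebra properties of $C^{0,\alpha}(\overline{\Omega'})$.
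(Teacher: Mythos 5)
Your proof is correct and is essentially the argument that the paper delegates to \cite{Korevaar-Convex}*{Lemma 1.5}: an implicit function theorem argument in H\"older spaces for the rescaled unknown $w^\varepsilon$, whose linearization at $\varepsilon=0$ is invertible precisely because the zeroth-order coefficient $-b_s(x,u,|\nabla u|^2)\geq 0$, i.e.\ because $b$ is non-increasing in its second slot. The paper's own proof is a one-line citation, so your write-up supplies the details of the same route rather than taking a different one.
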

\begin{proof}
This follows from \cite{Korevaar-Convex}*{Lemma 1.5}, which only makes use of elliptic estimates that hold true in our setting by working in local coordinates.
\end{proof}
By taking an exhaustion of the domain $\Omega$ and letting $\varepsilon \to 0$, Lemma \ref{lem:perturbation} implies the following corollary:
\begin{corollary}\label{cor:elliptic-improve}
Theorem \ref{thm:elliptic} holds for $f(|\nabla u|,-\nabla^2 u) = -\Delta u$ under the weaker assumption that $b$ is non-increasing in the second variable.
\end{corollary}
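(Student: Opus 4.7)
The plan is to follow Korevaar's perturbation strategy: convert the non-strict monotonicity of $b$ in its second slot into strict monotonicity at the cost of an $\varepsilon$-error, apply the already-established Theorem \ref{thm:elliptic} to the perturbed solutions, and then pass to the limit $\varepsilon \to 0$.

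First, I would exhaust $\Omega$ by an increasing sequence of geodesically convex subdomains $\Omega_k \subset\subset \Omega_{k+1} \subset\subset \Omega$ with smooth boundaries satisfying $\bigcup_k \Omega_k = \Omega$. On each $\Omega_k$, Lemma \ref{lem:perturbation} produces, for every sufficiently small $\varepsilon > 0$, a solution $v^\varepsilon_k \in C^{2,\alpha}(\bar\Omega_k)$ of
\[
    -\Delta v^\varepsilon_k = b(x, v^\varepsilon_k, |\nabla v^\varepsilon_k|^2) - \varepsilon v^\varepsilon_k \text{ in } \Omega_k, \qquad v^\varepsilon_k = u \text{ on } \partial\Omega_k,
\]
with $\|v^\varepsilon_k - u\|_{C^{2,\alpha}(\bar\Omega_k)} \leq M_k \varepsilon$. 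The modified nonlinearity $\tilde{b}^\varepsilon(x, u, p) := b(x, u, p) - \varepsilon u$ is now \emph{strictly} decreasing in its second slot (since $b$ is non-increasing there), while the remaining structural hypotheses---non-increase in $p$ and joint concavity in the first two slots---persist because the correction $-\varepsilon u$ is linear in $u$.

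The next step is to apply Theorem \ref{thm:elliptic} to each $v^\varepsilon_k$ on $\Omega_k$, which gives concavity of $v^\varepsilon_k$ on $\Omega_k$. I expect the main obstacle to be verifying the boundary condition \eqref{eq:mat-bdry-condition} for $v^\varepsilon_k$ on $\partial\Omega_k$, since this condition is originally assumed only for $u$ on the ambient boundary $\partial\Omega$. A natural route is to choose the exhaustion carefully---for instance, take $\Omega_k$ to be inner tubular neighborhoods of $\Omega$ given by level sets of the geodesic distance to $\partial\Omega$---so that a quantitative version of \eqref{eq:mat-bdry-condition} for $u$ propagates inward to $\partial\Omega_k$ for $k$ large, and then transfer this strict inequality to $v^\varepsilon_k$ using the $C^{2,\alpha}$ proximity $\|v^\varepsilon_k - u\|_{C^{2,\alpha}} = O(\varepsilon)$. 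An alternative is to avoid this issue directly and instead bound the boundary values of the two-point function $Z^\varepsilon_k$ associated to $v^\varepsilon_k$ via $v^\varepsilon_k|_{\partial\Omega_k} = u|_{\partial\Omega_k}$, thereby reducing to an interior minimum analysis analogous to the one in the proof of Theorem \ref{thm:elliptic}.

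Finally, having secured concavity of each $v^\varepsilon_k$, I would let $\varepsilon \to 0$: by the uniform bound from Lemma \ref{lem:perturbation}, $v^\varepsilon_k \to u$ in $C^2(\bar\Omega_k)$, and since concavity is preserved under $C^0$ limits, $u$ is concave on $\Omega_k$. Letting $k \to \infty$ and using that $\Omega = \bigcup_k \Omega_k$ gives concavity of $u$ on $\Omega$, as required.
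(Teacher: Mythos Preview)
Your approach is essentially the same as the paper's, whose proof is a single sentence: exhaust $\Omega$, apply Lemma \ref{lem:perturbation} to obtain perturbed solutions with strictly monotone right-hand side, invoke Theorem \ref{thm:elliptic}, and let $\varepsilon \to 0$. You are in fact more careful than the paper in flagging the verification of \eqref{eq:mat-bdry-condition} for $v^\varepsilon_k$ on $\partial\Omega_k$ and the geodesic convexity of the exhaustion domains as points needing attention; the paper does not address either.
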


Similarly, using a trick attributed to Evans in \cite{Korevaar-Convex}, we observe that if $u$ solves
\EQ{
\partial_t u = \Delta u + b(x,u,|\nabla u|^2)
}
with $b(x,u,|\nabla u|^2)$ satisfying $\frac{\partial b}{\partial u}<0$, then 
\begin{align}
    v(x,t) =e^{-\varepsilon t} u(x,t)
\end{align}
solves 
\begin{align}
\partial_t v= \Delta v + \left(-\varepsilon v   + e^{-\varepsilon t} b(x,e^{\varepsilon t}v,e^{\varepsilon t}|\nabla v|)\right) 
&=  \Delta v + \Tilde{b}(t,x,v,|\nabla v|)
\end{align}
which in turn implies $\frac{\partial \Tilde{b}}{\partial u} < 0$ and hence $v$ is concave by Theorem \ref{thm:parabolic}, provided the boundary conditions are met. Sending $\varepsilon\to 0$, we have $v\to u$ uniformly which implies that Theorem \ref{thm:parabolic} holds for $u$ under the assumption that $\frac{\partial b}{\partial u}\leq 0.$ Thus we get

\begin{corollary}\label{cor:parabolic-improve}
Theorem \ref{thm:parabolic} holds for $f(|\nabla u|,-\nabla^2 u) = -\Delta u$ under the weaker assumption that $b$ is non-increasing in the second variable.
\end{corollary}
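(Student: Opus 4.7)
The plan is to make precise the Evans substitution that was just sketched, and deduce the corollary from Theorem \ref{thm:parabolic} by a perturbation-and-limit argument.

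First, I fix $\varepsilon > 0$ and set $v^\varepsilon(x,t) = e^{-\varepsilon t} u(x,t)$. A direct computation shows that $v^\varepsilon$ solves
\begin{equation}
\partial_t v^\varepsilon = \Delta v^\varepsilon + \tilde{b}_\varepsilon(t,x,v^\varepsilon,|\nabla v^\varepsilon|),
\end{equation}
where $\tilde{b}_\varepsilon(t,x,s,p) = -\varepsilon s + e^{-\varepsilon t} b(x, e^{\varepsilon t} s, e^{\varepsilon t} p)$. The central checks are that $\tilde{b}_\varepsilon$ satisfies the hypotheses of Theorem \ref{thm:parabolic} with \emph{strict} monotonicity in the second slot: differentiating gives $\partial_s \tilde{b}_\varepsilon = -\varepsilon + \partial_u b \leq -\varepsilon < 0$, which is the key improvement. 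Non-increase in the third slot is preserved because the factors $e^{\pm \varepsilon t}$ cancel. Joint concavity of $\tilde{b}_\varepsilon$ in the first two variables follows since the added term $-\varepsilon s$ is affine in $s$ and the composition $b(x, e^{\varepsilon t} s, e^{\varepsilon t} p)$ inherits joint concavity from $b$ (the map $s \mapsto e^{\varepsilon t} s$ is linear, hence preserves the midpoint inequality along $\gamma_{x,y}$).

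Next I verify the side conditions. The boundary hypothesis \eqref{eqn:parabolic-bcs} for $v^\varepsilon$ reduces to the same hypothesis for $u$ multiplied by the positive factor $e^{-\varepsilon t}$, so it is inherited. The initial concavity of $v^\varepsilon(\cdot,0) = u(\cdot,0)$ is automatic. A mild caveat is that $\tilde{b}_\varepsilon$ depends explicitly on $t$, whereas Theorem \ref{thm:parabolic} was stated with $b = b(\cdot,u,|\nabla u|)$; however, inspection of its proof shows that the argument is applied pointwise at the minimum time $t_0$, so time-dependence of $b$ is harmless provided the structural inequalities hold uniformly in $t$, which they do here. Applying Theorem \ref{thm:parabolic} to $v^\varepsilon$ then yields that $v^\varepsilon(\cdot,t)$ is concave for every $t \in (0,T]$.

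Finally, I pass to the limit $\varepsilon \downarrow 0$. On any $\bar{\Omega} \times [0,T]$ one has $v^\varepsilon = e^{-\varepsilon t} u \to u$ uniformly as $\varepsilon \to 0$. Since a uniform limit of concave functions on a geodesically convex set is concave (the midpoint-concavity inequality passes to the limit, and Jensen's theorem applies as in Theorem \ref{thm:elliptic}), we conclude that $u(\cdot,t)$ is concave for each $t \in (0,T]$, which is the desired conclusion.

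The main obstacle, I expect, is simply the bookkeeping for the two points above that go slightly beyond a purely formal transplant of Korevaar's Evans trick: (i) confirming that the joint-concavity-in-(slot 1, slot 2) hypothesis is preserved under the $s \mapsto e^{\varepsilon t} s$ rescaling, and (ii) noting that the proof of Theorem \ref{thm:parabolic} tolerates an explicit $t$-dependence in $b$ because the structural inequalities are used only at the minimum time. Both are routine but should be flagged for the reader.
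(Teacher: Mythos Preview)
Your proposal is correct and follows exactly the same route as the paper: the Evans substitution $v^\varepsilon = e^{-\varepsilon t}u$, verification that the transformed right-hand side $\tilde{b}_\varepsilon$ is now \emph{strictly} decreasing in the second slot (with the other structural conditions preserved), application of Theorem~\ref{thm:parabolic} to $v^\varepsilon$, and passage to the limit $\varepsilon\to 0$. Your write-up is in fact more detailed than the paper's own sketch---you explicitly check the joint-concavity and third-slot monotonicity, verify the boundary and initial conditions, and flag the harmless $t$-dependence of $\tilde{b}_\varepsilon$---but the underlying argument is identical.
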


\subsection{Boundary condition}

In Theorem \ref{thm:elliptic} and \ref{thm:parabolic}, the boundary conditions \eqref{eq:mat-bdry-condition} and \eqref{eqn:parabolic-bcs} ensure that the two-point function $Z$ attains its minimum in the interior of $\bar{\Omega} \times \bar{\Omega}$. However, the condition demands checking a differential inequality for the solution at pairs of points, one lying on the boundary and the other in the interior of the domain, which \emph{a priori} seems difficult to do since it requires knowledge of the unknown function $u$ at interior points of the domain.

One alternative is to assume a growth condition at the boundary of the domain. 
\begin{theorem} \label{thm:u-to-infty}
Assume $u$ is as in Theorem \ref{thm:elliptic} except instead of the boundary condition \eqref{eq:mat-bdry-condition}, we assume that $u \to -\infty$ at the boundary of $\Omega$. Then $u$ is concave. 
% we only use this for the lioville problem which has a strictly decreasing b so we don't need that version, but i think it should work?
\end{theorem}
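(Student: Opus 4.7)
The plan is to follow the framework of the proof of Theorem \ref{thm:elliptic}, replacing the use of the boundary condition \eqref{eq:mat-bdry-condition} by the growth condition $u \to -\infty$ at $\partial\Omega$. Recall that once the two-point function $Z(x, y) := u(\gamma_{x,y}(0)) - \frac{1}{2}(u(x) + u(y))$ is known to attain its infimum at some interior $(x_0, y_0) \in \Omega \times \Omega$, the second-order analysis (Lemma \ref{lem:second-order-Zmin}) combined with the chain of inequalities using the PDE and the structural assumptions on $f$ and $b$ forces $Z(x_0, y_0) \geq 0$. Together with $Z(x, x) = 0$ this yields midpoint concavity of $u$, hence concavity by Jensen's theorem. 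So the task reduces to showing that $s := \inf_{\Omega \times \Omega} Z$ (which is always $\leq 0$) is attained at an interior point.

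First I would handle the easy subcase: take a minimizing sequence $(x_n, y_n)$ converging in the compact set $\bar{\Omega} \times \bar{\Omega}$ to some $(x^*, y^*)$, and suppose $x^* \in \partial\Omega$ (by symmetry between $x$ and $y$). Then $u(x_n) \to -\infty$. If $y^* \in \Omega$, I would invoke the elementary fact that in a geodesically convex closed set the midpoint of a geodesic between a boundary and an interior point lies in the interior; this carries over to $\CP^n$ by the same convex-geometric argument as in Euclidean space. Combined with continuity of the midpoint map this gives $m_n := \gamma_{x_n, y_n}(0) \to m^* \in \Omega$, so $u(m_n)$ stays bounded while $u(x_n) \to -\infty$, forcing $Z(x_n, y_n) \to +\infty$ and contradicting $Z(x_n, y_n) \to s < +\infty$.

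The harder subcase is when both $x^*, y^* \in \partial\Omega$, where $u(x_n), u(y_n) \to -\infty$ simultaneously and $m_n$ may itself approach $\partial\Omega$, so the above combinatorial argument is not directly available. To bypass this I would exhaust $\Omega$ from within by smooth, strictly geodesically convex subdomains $\Omega_k \subset\subset \Omega$ with $\bigcup_k \Omega_k = \Omega$; such exhaustions can be constructed by standard regularization of inner parallel sets. On each $\bar{\Omega}_k$ the function $u$ is $C^2$ up to the boundary, and $u|_{\partial\Omega_k}$ is uniformly very negative for large $k$. I would verify the boundary condition \eqref{eq:mat-bdry-condition} for $u$ restricted to $\Omega_k$, apply Theorem \ref{thm:elliptic} to conclude that $u$ is concave on each $\Omega_k$, and take $k \to \infty$ to obtain concavity on $\Omega$.

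The main obstacle I anticipate is the quantitative verification of \eqref{eq:mat-bdry-condition} on $\partial\Omega_k$: for $(x, y) \in \partial\Omega_k \times \bar{\Omega}_k$ one needs $Du_x(\dot{\gamma}_{x,y}(-1)) > Du_y(\dot{\gamma}_{x,y}(1))$. The blow-down of $u$ near $\partial\Omega$ suggests $\nabla u$ should be large in the inward direction at $\partial\Omega_k$ for large $k$, but this is not automatic from the pointwise hypothesis alone, since $\nabla u$ need only be unbounded near $\partial\Omega$ in an averaged sense rather than pointwise. Propagating the growth of $u$ into pointwise growth of $Du_x(\dot{\gamma}_{x,y}(-1))$ likely requires exploiting the PDE together with the monotonicity of $f$ and $b$, and this is the step I expect to be the most delicate.
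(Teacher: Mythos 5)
Your reduction is the right one and matches the paper's: once $Z$ is known to attain its infimum at an interior point of $\Omega\times\Omega$, Lemmas \ref{lem:first-order-Zmin} and \ref{lem:second-order-Zmin} and the chain of inequalities from the proof of Theorem \ref{thm:elliptic} go through verbatim and give $Z\geq 0$. Your treatment of the subcase $x^*\in\partial\Omega$, $y^*\in\Omega$ is also fine. But the proof is incomplete exactly where it becomes nontrivial: the subcase $x^*,y^*\in\partial\Omega$ is not closed. Your exhaustion fallback hinges on verifying \eqref{eq:mat-bdry-condition} for $u$ on $\partial\Omega_k\times\bar\Omega_k$, and, as you yourself observe, the hypothesis $u\to-\infty$ controls only the \emph{values} of $u$ near $\partial\Omega$; it forces the inward derivative to be large only in an averaged sense along rays, not pointwise on a prescribed hypersurface $\partial\Omega_k$, and the condition moreover couples the derivative at $x\in\partial\Omega_k$ with the derivative at an arbitrary $y\in\bar\Omega_k$. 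There is no evident way to extract this pointwise two-point inequality from the PDE and the monotonicity of $f$ and $b$ without essentially already knowing concavity, so the step you flag as ``delicate'' is a genuine missing piece, not a routine verification.

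The paper avoids this entirely: it quotes a compactness lemma (\cite{Kawohl}*{Lemma 3.11}) asserting that for a continuous $u$ with $u\to-\infty$ at $\partial\Omega$ on a convex domain, the concavity function attains its (nonpositive) infimum at interior points. That lemma is proved using only the values of $u$ --- boundedness of $u$ from above together with the blow-down at the boundary --- and requires no derivative information whatsoever; in particular it handles the case where both points of a minimizing sequence drift to $\partial\Omega$ (where the midpoint may also drift to $\partial\Omega$ and your direct argument breaks down). To repair your proof you should either invoke such a lemma or reprove it, rather than routing the hard case back through the derivative condition \eqref{eq:mat-bdry-condition}, which is precisely the hypothesis the theorem is designed to dispense with.
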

\begin{proof}
It suffices to show that $Z$ (now defined on $\Omega \times \Omega$) is nonnegative. First argue that $Z$ attains its infimum; this for instance follows from \cite{Kawohl}*{Lemma 3.11}. Then we can argue as in the proof of Theorem \ref{thm:elliptic} to prove that $Z(x,y) \geq 0$, implying that $u$ is concave.
\end{proof}

\subsection{Examples} 
We collect a few examples of PDEs to which our main theorems can be applied. 
\newline
Consider the torsion problem
\begin{align}
    -\Delta u= 1 \quad \text{in }\Omega.
\end{align}
Then Lemma \ref{cor:elliptic-improve} shows that $u$ is concave provided \eqref{eq:mat-bdry-condition} holds. In the parabolic setting, if $u = u(x,t)$ solves the standard heat equation
\begin{align}
    \partial_t u = \Delta u \quad \text{in } \Omega,
\end{align}
then $v=-\log u$ satisfies 
\begin{align}
    \partial_t v = \Delta v - |\nabla v|^2.
\end{align}
Then Lemma \ref{cor:parabolic-improve} implies that $v(\cdot,t)$ is concave, provided that $v(\cdot,0)$ is concave and $v$ satisfies \eqref{eqn:parabolic-bcs}. In other words, the heat flow preserves log-convexity of $u$. On the other hand, using Theorem \ref{thm:u-to-infty} we can establish concavity on $\mathcal{M}$ for solutions of Liouville's equation (see Example 3.3 in the book \cite{Kawohl}), 
\begin{align} \label{eqn:liouville}
    \begin{cases}
        -\Delta u = ce^{-du} & \text{in } \Omega, \\
        u \to -\infty & \text{uniformly as } d(x,\del\Omega) \to 0,
    \end{cases}
\end{align}
for constants $c,d\geq 0$. Indeed, this PDE meets the structural conditions to apply Theorem \ref{thm:u-to-infty} and consequently we get that $u$ is concave.

Another typical way to enforce the constraint $u\to -\infty$ is to apply a transformation $v=-g(u)$ where $u$ is a solution to some elliptic PDE and $g:\R^+\to \R$ is a $C^1$ function satisfying $\lim_{u\to 0^+}g(u)=+\infty$. Theorem \ref{thm:u-to-infty} can then be applied to $v$. For instance, if $u$ is a positive solution of
\begin{align}
    \begin{cases}
        \Delta u = \frac{|\nabla u|^2}{u} + u^p & \text{in } \Omega, \\
        u = 0 & \text{on } \del\Omega,
    \end{cases}
\end{align}
then $v= \log u$ satisfies $v(x) \to -\infty$ as $x\to \partial \Omega.$ Furthermore, $v$ satisfies
\begin{align}
    -\Delta v &= -\left(\frac{\Delta u}{u} - \frac{|\nabla u|^2}{u^2}\right) = -u^{p-1} = -e^{v(1-p)}.
\end{align}
If $p \leq 1$, then $-e^{v(1-p)}$ is non-increasing and concave in $v$, so Theorem \ref{thm:u-to-infty} (together with Corollary \ref{cor:elliptic-improve} if $p=1$) implies that $v$ is a concave function.
\subsection*{Acknowledgments}
The authors would like to thank Tobias Colding and William Minicozzi for their insightful advice and encouragement. We also appreciate the stimulating discussions with Gabriel Khan and Malik Tuerkoen, and we are grateful to the referees for suggesting a generalization of our main result and providing detailed feedback. M.L. acknowledges support from a Croucher Scholarship.
\subsection*{Data Availability Statement}
Data sharing is not applicable to this article as no datasets were generated or analyzed during the current study.
\subsection*{Ethics declarations}
\subsubsection*{Conflict of interest}
All authors declare that they have no conflict of interest.

\bibliography{Refs}
\end{document}